\documentclass[a4paper,10pt]{amsart}

\usepackage{mathrsfs}
\usepackage{amssymb,amsmath, amsfonts, amsthm}
\usepackage{latexsym}
\usepackage{color} %{\color{red}..............OK}
\usepackage[dvips]{epsfig}
\usepackage{graphicx}

\newtheorem{defi}{Definition}[section]
\newtheorem{theorem}[defi]{Theorem}
\newtheorem{lemma}[defi]{Lemma}

\newtheorem{proposition}[defi]{Proposition}
\newtheorem{remark}[defi]{Remark}

\def\N{\mathbb{N}}
\def\R{\mathbb{R}}

%opening
\title{Local and global solutions for some parabolic nonlocal problems}
\author{Isabella Ianni}
\address{Department of Mathematics, Seconda Universita di Napoli\\ viale Lincoln 5, Caserta, Italy}
\email{isabella.ianni@unina2.it}
\keywords{Nonlocal parabolic problem, Newtonian nonlinearity, local existence, global existence, a priori estimates}
\begin{document}

\maketitle

\begin{abstract}
We study local and global existence of solutions for some semilinear parabolic initial boundary value problems with  autonomous nonlinearities having a ``Newtonian'' nonlocal term.
\end{abstract}

\section{Introduction}

We consider the following  semilinear  parabolic initial-boundary-value  problems (IBVP)

\begin{equation}\label{IBVP}
 \left\{
\begin{array}{l}
 \frac{\partial u}{\partial t}-\Delta u +u=F_i(u)\qquad \mbox{ in } \Omega\times (0,+\infty),\\
u=0 \qquad\qquad\qquad\qquad\;\;\;\mbox{ in } \partial\Omega\times (0,+\infty),\\
u(\cdot,0)=u_0 \qquad\qquad\qquad\;\mbox{ on }\Omega,
\end{array}
\right.
\end{equation}
$$$$
where $\Omega$ is a smoothly bounded domain in $\R^3$ and $F_i,$ $i=1,2,3$ is one of the following autonomous nonlinearities 

\begin{equation}\label{1nnlinearity}F_1(u)(x):=\phi_u(x) u(x) \qquad x\in\Omega,\end{equation}
\begin{equation}\label{2nnlinearity}F_2(u)(x):=|u(x)|^{q-1}u(x)+\phi_u(x) u(x) \qquad x\in\Omega, \end{equation}
\begin{equation}\label{nnlinearity}F_3(u)(x):=|u(x)|^{q-1}u(x)-\phi_u(x) u(x) \qquad x\in\Omega, \end{equation}
$$$$
$q\in (1,5)$ and $\phi_u$ is the ``Newtonian'' nonlocal term:

\begin{equation}\label{nnlinearity2}\phi_u(x):=\int_{\Omega}\frac{1}{|x-y|}u^2(y)dy\qquad x\in\R^3.\end{equation}

$$$$
Elliptic problems with the nonlocal term $\phi_uu$ have been object of many investigations in the last years (Schr\"odinger-Newton problem, Schr\"odinger-Poisson-Slater problem). In particular in \cite{IanniSignChanging} the author itself studies nodal solutions for the  nonlocal elliptic problem corresponding to \eqref{IBVP} with nonlinearity \eqref{nnlinearity} via a dynamical approach. From this work the interest in the parabolic problems \eqref{IBVP} arises naturally.
$$$$

Up to our knowledge, this is the first time that parabolic problems with nonlinearities involving the nonlocal term \eqref{nnlinearity2}  are  studied. Different kinds of superlinear nonlocal nonlinearities have been exploited for instance in \cite{QuittnerGen}, moreover the semilinear parabolic problem  with  the power-type nonlinearity has been extensively studied (see for instance \cite{Amann2, Amann, CazenaveLions, Quittner}).\\

As we will see a key ingredient to handle this nonlocal term is the Hardy-Littlewood-Sobolev inequality  (see Lemma \ref{Lemma1}).

This inequality is crucial  to prove the local existence of solutions (see the proof of Lemma \ref{ipotesi_Hpstausigmal_diAmann}).
Moreover thanks to the same  inequality  we are  also able to obtain a ``polynomial bound'' for the  nonlinearities which, together with an opportune  {\itshape{a priori}} bound for the solutions, eventually leads us to obtain global existence and compactness results.
\\
We anticipate that, concerning the a priori bound for the solutions,  the cases of ``combined" nonlinearity, namely nonlinearity \eqref{2nnlinearity} or \eqref{nnlinearity}, are the most delicate to be studied, since the a priori bound we need depends also on the value of $q.$ 

Indeed, as we will see,  in order to cover different values of $q,$ we need to combine the techniques we previously used  for the ``pure Newtonian" case (namely nonlinearity \eqref{1nnlinearity}) with some more refined arguments, which among other things  involve once more the Hardy-Littlewood-Sobolev inequality.
Moreover the case of nonlinearity \eqref{2nnlinearity} is particularly critical and we obtain a priori bounds only restricting the value of the exponent $q$ to the range $[3,5).$ 

$$$$

Our first main result is the following local existence and regularity theorem

$$$$

\begin{theorem}\label{teoLocal}
Let $p>3.$ 
For every $u_0\in W^{1,p}_{0}(\Omega)$ the IBVP \eqref{IBVP} has a unique $L^p$-solution $u(t)=\varphi(t,u_0)$
with maximal existence time $T:=T(u_0)>0.$

Moreover
\begin{itemize}
\item[i)] $u\in C^1((0,T),L^p(\Omega))\cap C((0,T),W^{2,p}(\Omega))\cap C^{\frac{1-\lambda}{2}}([0,T),W^{\lambda,p}(\Omega))$ for every $\lambda\in [0,1];$\\

\item[ii)] for each $t_1\in(0,T(u_0))$ the solution $u$ satisfies the integral equation
\begin{equation}\label{eqIntegrale}u(t)=e^{-(t-t_1)A_p}u(t_1)+\int_{t_1}^te^{-(t-s)A_p}F(u(s))ds\qquad t\in [t_1,T(u_0))\end{equation}
where $A_p:=-\Delta+Id:W^{2,p}_0(\Omega)\subset L^p(\Omega)\rightarrow L^p(\Omega);$\\ 

\item[iii)] the set $\mathcal G:=\{(t,u_0)\in [0,\infty)\times W_0^{1, p}(\Omega) :  t\in [0,T(u_0))\}$ is open in $[0,\infty)\times W^{1,p}_{0}(\Omega),$  $\varphi:\mathcal G\rightarrow W^{1,p}_{0}(\Omega)$ is a semiflow on $W^{1,p}_{0}(\Omega);$\\

\item[iv)] $u\in C^{\frac{2-\lambda}{2}}((0,T),W^{\lambda,p}(\Omega))$ for any $\lambda\in [1,2).$ Moreover 
for every $u_0\in W_0^{1, p}(\Omega)$ and every $t\in (0,T(u_0))$ there is a neighborhood $U\subset W_0^{1, p}(\Omega)$ of $u_0$ in $W_0^{1, p}(\Omega)$ and a positive constant $C$ such that $T(\tilde u_0)>t$ for $\tilde u_0\in U,$ and 
$$\|\varphi (t,\tilde u_0)- \varphi (t,u_0)\|_{\lambda,p}\leq C\|\tilde u_0-u_0\|_{1,p}$$

\item[v)] u is a classical solution for $t\in (0,T).$
\end{itemize}
\end{theorem}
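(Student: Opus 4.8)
The plan is to recast \eqref{IBVP} as an abstract semilinear Cauchy problem
\[
\dot u + A_p u = F_i(u), \qquad u(0)=u_0,
\]
in the Banach space $L^p(\Omega)$, and to invoke the abstract local existence theory for such equations in the spirit of Amann (cf.\ \cite{Amann2,Amann}). The operator $A_p=-\Delta+Id$ with domain $W^{2,p}_0(\Omega)$ is sectorial and strictly positive (the shift by $Id$ pushes the spectrum away from the origin), hence it generates an analytic semigroup $e^{-tA_p}$ on $L^p(\Omega)$ and admits well-defined fractional powers $A_p^\theta$. The associated interpolation scale coincides, up to equivalent norms and with the correct boundary conditions, with the Bessel-potential scale; in particular $W^{1,p}_0(\Omega)$ arises as the interpolation space $\big[L^p(\Omega),W^{2,p}_0(\Omega)\big]_{1/2}$, which is exactly the phase space in which we prescribe the initial datum. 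Working in this scale, the abstract theory produces a unique maximal $L^p$-solution together with the smoothing estimates of (i) and the variation-of-constants representation (ii).

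The real work is to verify the structural hypothesis on the nonlinearity --- this is the content of Lemma \ref{ipotesi_Hpstausigmal_diAmann} --- namely that each $F_i$ maps $W^{1,p}_0(\Omega)$ locally Lipschitz-continuously into $L^p(\Omega)$, with a polynomial growth bound. Here the assumption $p>3$ is decisive: in dimension three the Sobolev embedding gives $W^{1,p}_0(\Omega)\hookrightarrow C(\overline\Omega)\hookrightarrow L^\infty(\Omega)$, so that any $u\in W^{1,p}_0(\Omega)$ is bounded and the power part $|u|^{q-1}u$, with $q\in(1,5)$, is trivially estimated in $L^\infty(\Omega)\subset L^p(\Omega)$. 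For the nonlocal part one controls $\phi_u$ from \eqref{nnlinearity2} through the Hardy--Littlewood--Sobolev inequality of Lemma \ref{Lemma1}: since $u^2\in L^\infty(\Omega)$ and $\Omega$ is bounded, the Newtonian potential $\phi_u$ lies in $L^\infty(\Omega)$, whence $\phi_u u\in L^\infty(\Omega)\subset L^p(\Omega)$. The Lipschitz estimate follows by writing the cubic difference $\phi_u u-\phi_v v$ as a telescoping sum, reducing to terms of the form $\phi_w(u-v)$ and $\phi_{u-v}\,w$ with $w\in\{u,v\}$, each handled by Lemma \ref{Lemma1} together with the $L^\infty$ bound on bounded subsets of $W^{1,p}_0(\Omega)$.

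Granting this, the abstract local existence theorem immediately yields the unique maximal $L^p$-solution $u=\varphi(t,u_0)$, the regularity in (i), the integral equation (ii), and the openness and semiflow properties in (iii). The continuous dependence and higher time-regularity in (iv) are the corresponding continuity and parabolic-smoothing statements of the same result, the bound $\|\varphi(t,\tilde u_0)-\varphi(t,u_0)\|_{\lambda,p}\le C\|\tilde u_0-u_0\|_{1,p}$ combining the Lipschitz dependence of the mild solution on the data with the analytic smoothing $e^{-tA_p}\colon L^p(\Omega)\to W^{\lambda,p}(\Omega)$. For the classical regularity (v) I would bootstrap: for $t>0$ the solution already lies in $W^{2,p}(\Omega)\hookrightarrow C^1(\overline\Omega)$, so $F_i(u(t))$ is H\"older continuous in $x$ --- note that $\phi_u$ solves $-\Delta\phi_u=4\pi u^2$ and is therefore as regular as $u^2$ permits --- and standard parabolic Schauder theory upgrades $u$ to a classical solution on $(0,T)$.

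The main obstacle I expect is precisely the verification of Lemma \ref{ipotesi_Hpstausigmal_diAmann}: one must place the Lipschitz requirement in an interpolation space that embeds into $L^\infty(\Omega)$ (which forces $p>3$) while still accommodating $W^{1,p}_0(\Omega)$ as the initial-value space, and keep the polynomial growth under control through the Hardy--Littlewood--Sobolev inequality.
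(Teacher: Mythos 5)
Your proposal is correct and follows essentially the same route as the paper: the crux in both is the local Lipschitz continuity of $F_i$ into $L^p(\Omega)$ (the paper's Lemma \ref{ipotesi_Hpstausigmal_diAmann}, proved there on $W^{\tau,p}(\Omega)$ with $\tau\in(3/p,1)$ so as to fit Amann's hypothesis $\mathscr H(p,s,\tau,\sigma,l)$ with $s=0$, $\sigma=1$, while you work on $W^{1,p}_0(\Omega)$ itself in a Henry-style fractional-power setting), after which Amann's abstract theorems give the existence, regularity, integral equation and semiflow statements, and parabolic Schauder estimates give the classical regularity in (v), exactly as you outline. The one imprecision is your attribution of the bound $\phi_u\in L^\infty(\Omega)$ to Lemma \ref{Lemma1}, which cannot produce it since the Hardy--Littlewood--Sobolev inequality there requires $r<\infty$; this is harmless, because the direct estimate you actually invoke, $\|\phi_u\|_\infty\le\|u\|_\infty^2\,\sup_x\int_\Omega|x-y|^{-1}\,dy<\infty$ on the bounded domain $\Omega$, is correct and does the job.
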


$$$$

We denote by $E_i:W^{1,2}_{0}(\Omega)\rightarrow \mathbb R,$ $i=1,2,3$ the energy functionals in case of nonlinearity \eqref{1nnlinearity}, \eqref{2nnlinearity} and \eqref{nnlinearity} respectively

$$E_1(u):=\frac{1}{2}\int_{\Omega} (|\nabla u(x)|^2+u(x)^2)dx-\frac{1}{4}\int_{\Omega} \phi_u(x)u^2(x) dx$$
$$E_2(u):=\frac{1}{2}\int_{\Omega} (|\nabla u(x)|^2+u(x)^2)dx-\frac{1}{4}\int_{\Omega} \phi_u(x)u^2(x) dx-\frac{1}{q+1}\int_{\Omega}  |u(x)|^{q+1}dx$$
$$E_3(u):=\frac{1}{2}\int_{\Omega} (|\nabla u(x)|^2+u(x)^2)dx+\frac{1}{4}\int_{\Omega} \phi_u(x)u^2(x) dx-\frac{1}{q+1}\int_{\Omega}  |u(x)|^{q+1}dx.$$

$$$$
Next results are about global existence and compactness in case of nonlinearity \eqref{1nnlinearity}, \eqref{2nnlinearity} and \eqref{nnlinearity} respectively:

$$$$

\begin{theorem}\label{1TeoGlobalSolution} Let $p>3.$\\
Let $u_0\in W^{1,p}_0(\Omega),$ $u(t)=\varphi (t, u_0)$ be the solution of \eqref{IBVP} and \eqref{1nnlinearity} and $T=T(u_0).$ \\
If $$t \mapsto E_1(u(t))\ \mbox{ is bounded from below on }(0,T)$$ then 
\begin{itemize} 
\item $T=+\infty$
\item
for every $\delta>0,$ 
$\sup_{t\geq\delta}\|u(t)\|_{s,p}<\infty$ for every $s\in [1,2)$
and 
the set $\{u(t):t\geq\delta\}$ is  relatively compact in $C^1(\bar \Omega).$
\end{itemize}
\end{theorem}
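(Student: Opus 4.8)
The plan is to exploit the variational (gradient-flow) structure of the problem. The equation can be written as the $L^2$-gradient flow $u_t=-E_1'(u)$, where $E_1'(u)=A_pu-F_1(u)=-\Delta u+u-\phi_uu$. I would first show that $E_1$ is a strict Lyapunov functional: multiplying the equation by $u_t$ and integrating gives $\frac{d}{dt}E_1(u(t))=-\|u_t(t)\|_{L^2}^2\le 0$ on $(0,T)$, which is legitimate thanks to the regularity furnished by Theorem~\ref{teoLocal}. Integrating and letting the lower endpoint tend to $0$, together with the hypothesis that $E_1(u(t))$ is bounded from below, yields that $E_1(u(t))$ converges as $t\to T$ and, crucially, that $\int_0^T\|u_t\|_{L^2}^2\,dt=E_1(u_0)-\lim_{t\to T}E_1(u(t))<\infty$, irrespective of whether $T$ is finite.

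The heart of the proof is a uniform a priori bound in $L^2$. Testing the equation with $u$ gives $\frac12\frac{d}{dt}\|u\|_{L^2}^2=-\|u\|_{H^1}^2+\int_\Omega\phi_uu^2$, and substituting $\int_\Omega\phi_uu^2=2\|u\|_{H^1}^2-4E_1(u)$ leads to
\[
\tfrac12\tfrac{d}{dt}\|u\|_{L^2}^2=\|u\|_{H^1}^2-4E_1(u(t)).
\]
Setting $y=\|u\|_{L^2}^2$ and $g=y-4E_1(u(t))$, and using $\|u\|_{H^1}^2=\|\nabla u\|_{L^2}^2+y$ together with $\frac{d}{dt}E_1(u(t))=-\|u_t\|^2$, one finds $g'=2\|\nabla u\|_{L^2}^2+2g+4\|u_t\|_{L^2}^2\ge 2g$. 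Hence, if $g(t_*)>0$ at some $t_*$, then $g(t)\ge g(t_*)e^{2(t-t_*)}$. When $T=\infty$ this forces $g\to\infty$, hence $y\to\infty$; combining $\|u_t\|_{L^2}\ge |y'|/(2\sqrt y)$ (Cauchy--Schwarz) with $y'\ge 2g$ gives $\|u_t\|_{L^2}^2\gtrsim g\to\infty$, so $\int^\infty\|u_t\|^2=\infty$, contradicting the dissipation bound. Therefore $g\le 0$, i.e. $\|u(t)\|_{L^2}^2\le 4E_1(u(t))\le 4E_1(u_0)$ uniformly in $t$. The same differential inequality shows that on any finite interval $(0,T)$ the quantity $y$ stays bounded, which will drive the global-existence step.

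Global existence and the uniform $H^1$ bound follow next. The Hardy--Littlewood--Sobolev inequality (Lemma~\ref{Lemma1}) gives $\int_\Omega\phi_uu^2\le C\|u\|_{L^{12/5}}^4\le C\|u\|_{L^2}^3\|u\|_{H^1}$ (interpolating $L^{12/5}$ between $L^2$ and $L^6$ and using $H^1\hookrightarrow L^6$); inserting this into $E_1(u(t))\le E_1(u_0)$ and absorbing by Young's inequality yields $\|u(t)\|_{H^1}^2\le 4E_1(u_0)+C\|u(t)\|_{L^2}^6$, so the $L^2$-bound upgrades to a uniform $H^1$-bound. To obtain $T=\infty$ I would argue by contradiction: if $T<\infty$, the bound on $y$ on $(0,T)$ gives a bound on $\|u(t)\|_{H^1}$ there, which the bootstrap below turns into a bound on $\|u(t)\|_{W^{1,p}}$ up to $T$, contradicting the blow-up alternative implicit in Theorem~\ref{teoLocal}; hence $T=\infty$.

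Finally, for higher regularity and compactness I would bootstrap through the integral equation \eqref{eqIntegrale}, using that $A_p=-\Delta+\mathrm{Id}$ has spectrum in $[1,\infty)$, so $\|A_p^{s/2}e^{-\tau A_p}\|_{\mathcal L(L^p)}\le C\tau^{-s/2}e^{-\tau}$. Since $\|\phi_u\|_{L^\infty}\le C\|u\|_{H^1}^2$ is uniformly bounded, $F_1(u)=\phi_uu$ is uniformly bounded in $L^6$; smoothing yields a uniform bound in $W^{s,6}\hookrightarrow L^\infty$ (for $s>1/2$), whence $\sup_t\|u(t)\|_{L^\infty}<\infty$ and $F_1(u)$ is uniformly bounded in every $L^p$. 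The factor $e^{-\tau}$ makes $\int_0^\infty\tau^{-s/2}e^{-\tau}\,d\tau$ finite, so the convolution term in \eqref{eqIntegrale} is bounded uniformly in $t$, giving $\sup_{t\ge\delta}\|u(t)\|_{s,p}<\infty$ for every $s\in[1,2)$. Choosing $s\in(1+3/p,2)$ (possible since $p>3$) and invoking the compact embedding $W^{s,p}(\Omega)\hookrightarrow\hookrightarrow C^1(\bar\Omega)$ gives relative compactness of $\{u(t):t\ge\delta\}$ in $C^1(\bar\Omega)$. I expect the main obstacle to be precisely the uniform $L^2$ bound: boundedness of the energy from below is not a coercivity statement (the quartic Newtonian term is focusing), so the bound cannot come from the energy alone and must be extracted from the interplay between the testing identity and the dissipation, as in the $g'\ge 2g$ argument above.
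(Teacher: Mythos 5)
Your route is genuinely different from the paper's. The paper obtains its a priori bound $\sup_{(0,T)}\|u(t)\|_a<\infty$, $a<18/5$ (Proposition \ref{aPriori}) from the inequality $\|u\|_{1,2}^2\le C(1+\|uu_t\|_1)$, the space--time interpolation embedding \eqref{interpolationTheorem2}, and a Gronwall bound on $\|u(t)\|_2$, and then feeds the Hardy--Littlewood--Sobolev polynomial bound $\|F_1(v)\|_p\le C\||v|^{3}\|_p$ (Lemma \ref{stimanonlinearitap}) into Amann's abstract global-existence and compactness theorems. You instead extract a uniform $L^2$ bound from an ODE dichotomy ($g'\ge 2g$), upgrade it to $H^1$ by Hardy--Littlewood--Sobolev, interpolation and Young applied inside the energy, and finish with a self-contained semigroup bootstrap based on the pointwise bound $\|\phi_u\|_{\infty}\le C\|u\|_{H^1}^2$. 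The dissipation identity, the $T=\infty$ dichotomy (if $g>0$ somewhere then $\|u_t\|_2^2\to\infty$, contradicting $\int_0^{\infty}\|u_t\|_2^2\,dt<\infty$), the $L^2$-to-$H^1$ upgrade, and the bootstrap/compactness steps are all correct; in fact your $T=\infty$ argument is arguably cleaner than the paper's, which invokes $\frac{d}{dt}E_1(u(t))\to 0$ (a claim that does not follow merely from monotone convergence of the energy).

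The genuine gap is the sentence ``the same differential inequality shows that on any finite interval $(0,T)$ the quantity $y$ stays bounded'', on which your entire proof that $T=+\infty$ rests. The inequality $g'\ge 2g$ is only a \emph{lower} bound: it is perfectly compatible with finite-time blow-up of $g$ (any solution of $g'=g^2$ with $g\ge 2$ satisfies it), so it cannot bound $y$ from above on a finite interval; nor can the identity $\frac{1}{2}y'=\|u\|_{H^1}^2-4E_1(u)$, since $\|\nabla u\|_2^2$ enters with the wrong sign for an upper bound. Without the $L^2$ bound on $(0,T)$ when $T<\infty$ you have no $H^1$ bound there, hence no bootstrap and no contradiction, i.e.\ no proof of global existence. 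The fix is short and uses only what you already established: by Cauchy--Schwarz and Young, $y'=2\int_{\Omega}uu_t\,dx\le y+\|u_t\|_2^2$, and since $\int_0^T\|u_t\|_2^2\,dt<\infty$, Gronwall gives $\sup_{(0,T)}y<\infty$ for $T<\infty$; this is precisely how the paper proves its estimate \eqref{tre}. Two minor further points: the blow-up alternative you invoke is not actually stated in Theorem \ref{teoLocal}, so you should cite it from Amann's theory (or argue that a uniform $W^{1,p}$ bound near $t=T$ plus the semiflow property allows continuation past $T$); and your smoothing estimates implicitly use $D(A_p^{s/2})\hookrightarrow W^{s,p}(\Omega)$, which deserves a word of justification.
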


$$$$

\begin{theorem}\label{2TeoGlobalSolution} Let $p>3.$\\
Let $u_0\in W^{1,p}_0(\Omega),$ $u(t)=\varphi (t, u_0)$ be the solution of \eqref{IBVP} and \eqref{2nnlinearity} and $T=T(u_0).$ \\
Let $q\in (1, 2^*-1)$. If $$t \mapsto E_2(u(t))\ \mbox{ is bounded from below on }(0,T)$$ then 
the conclusions of Theorem \ref{1TeoGlobalSolution} are true.\end{theorem}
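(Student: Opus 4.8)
The plan is to follow the scheme of Theorem \ref{1TeoGlobalSolution}, the only genuinely new feature being the interaction of the power term with the Newtonian one, which will be controlled through the Hardy--Littlewood--Sobolev inequality (Lemma \ref{Lemma1}). Throughout I write $\|u\|_{1,2}^2=\int_\Omega(|\nabla u|^2+u^2)\,dx$, $D(u)=\int_\Omega\phi_u u^2\,dx$ and $P(u)=\int_\Omega|u|^{q+1}\,dx$, so that $E_2(u)=\tfrac12\|u\|_{1,2}^2-\tfrac14 D(u)-\tfrac1{q+1}P(u)$. First I would verify that $E_2$ is a strict Lyapunov functional: differentiating along the flow and using \eqref{IBVP}--\eqref{2nnlinearity} together with $u_t=0$ on $\partial\Omega$ gives $\tfrac{d}{dt}E_2(u(t))=-\|u_t(t)\|_{L^2}^2\le 0$. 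Hence $t\mapsto E_2(u(t))$ is nonincreasing; combined with the hypothesis that it is bounded below, it converges as $t\to T$, so there is $E_0$ with $E_2(u(t))\le E_0$ for $t\ge t_1$, and $\int_{t_1}^T\|u_t(s)\|_{L^2}^2\,ds<\infty$ for every $t_1\in(0,T)$.

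The heart of the matter is an a priori bound. Testing the equation with $u$ gives $\tfrac12\tfrac{d}{dt}\|u\|_{L^2}^2=-\|u\|_{1,2}^2+P(u)+D(u)$, and eliminating $\|u\|_{1,2}^2$ through the energy yields the identity
\[
\tfrac12\tfrac{d}{dt}\|u\|_{L^2}^2=-2E_2(u)+\tfrac12 D(u)+\tfrac{q-1}{q+1}P(u).
\]
Since $q>1$ both coefficients on the right are positive, so, writing $y=\tfrac12\|u\|_{L^2}^2$ and using $E_2\ge\inf E_2$, I get $D(u)+P(u)\le C(|y'|+1)\le C(\|u\|_{L^2}\|u_t\|_{L^2}+1)$; feeding this back into the energy produces the basic inequality
\[
\|u(t)\|_{1,2}^2\le C\big(\|u(t)\|_{L^2}\,\|u_t(t)\|_{L^2}+1\big),
\]
with $C$ depending only on $q$ and the two energy bounds. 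I stress that it is exactly the lower bound on the energy, via the convergence of the dissipation $\int\|u_t\|_{L^2}^2$, that makes this usable: the doubly focusing signs preclude any unconditional bound.

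From here both assertions follow. For global existence, suppose $T<\infty$; then $\|u(t)-u(s)\|_{L^2}\le\sqrt{t-s}\,(\int_s^T\|u_\tau\|_{L^2}^2)^{1/2}\to0$ as $s,t\to T$, so $u(t)$ is Cauchy, hence bounded, in $L^2$. The basic inequality gives $\|u(t)\|_{1,2}^2\le C(1+\|u_t(t)\|_{L^2})$, which integrated over $(t_1,T)$ shows $u\in L^2((t_1,T);W^{1,2}_0(\Omega))$; a bootstrap using the integral equation \eqref{eqIntegrale}, the smoothing of $e^{-tA_p}$, and the polynomial bound for $F_2$ from Lemma \ref{Lemma1} (as in Lemma \ref{ipotesi_Hpstausigmal_diAmann}) bounds $\|u(t)\|_{1,p}$ up to $T$, contradicting the blow-up alternative of Theorem \ref{teoLocal}; hence $T=+\infty$. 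For the uniform bound I would then exploit the finiteness of $\int_\delta^\infty\|u_t\|_{L^2}^2$ to pick, for each integer $n\ge\delta$, a time $s_n\in[n,n+1]$ with $\|u_t(s_n)\|_{L^2}^2\le\int_n^{n+1}\|u_t\|_{L^2}^2\to0$. At these \emph{good times} the basic inequality reads $X_n^2\le C(1+X_nb_n)$ with $X_n=\|u(s_n)\|_{1,2}$ and $b_n=\|u_t(s_n)\|_{L^2}\to0$, a quadratic inequality forcing $\sup_n X_n<\infty$; propagating across the unit intervals $[s_n,s_{n+1}]$ by the finite-interval estimate above yields $\sup_{t\ge\delta}\|u(t)\|_{1,2}<\infty$. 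The remaining statements are then as in Theorem \ref{1TeoGlobalSolution}: the $W^{1,2}$ bound and Lemma \ref{Lemma1} give a uniform $L^r$ bound on $F_2(u(t))$, the smoothing of $e^{-tA_p}$ in \eqref{eqIntegrale} upgrades this to $\sup_{t\ge\delta}\|u(t)\|_{s,p}<\infty$ for $s\in[1,2)$, and the compact embedding $W^{s,p}\hookrightarrow C^1(\bar\Omega)$ (with $s$ near $2$, $p>3$) gives relative compactness of $\{u(t):t\ge\delta\}$ in $C^1(\bar\Omega)$.

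I expect the main obstacle to be this a priori bound: keeping the estimate uniform in time for the whole range $q\in(1,2^*-1)$ while the nonlinearity pushes toward blow-up. The device that resolves it is the positivity of the coefficients in the displayed identity for $\tfrac{d}{dt}\|u\|_{L^2}^2$, combined with the good times $s_n$ supplied by the finite dissipation; the subcriticality $q<2^*-1$ and Lemma \ref{Lemma1} are then what make the bootstrap to $W^{s,p}$ and $C^1(\bar\Omega)$ go through.
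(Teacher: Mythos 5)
Your opening steps are sound and coincide with the paper's: the energy identity $\frac{d}{dt}E_2(u(t))=-\|u_t(t)\|_2^2$, the finiteness of the dissipation integral, and the basic inequality $\|u(t)\|_{1,2}^2\le C\left(1+\|u(t)\|_2\|u_t(t)\|_2\right)$ are exactly \eqref{uno}, \eqref{cinque} and \eqref{sei} of the paper, derived the same way. The genuine gap is the step you dispatch in one sentence: ``a bootstrap using the integral equation \eqref{eqIntegrale}, the smoothing of $e^{-tA_p}$, and the polynomial bound for $F_2$ \dots bounds $\|u(t)\|_{1,p}$ up to $T$.'' This cannot work from the information you have. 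When $T<\infty$ all you control is $u\in L^\infty((t_1,T);L^2)\cap L^2((t_1,T);W^{1,2})$; squaring the basic inequality and using \eqref{cinque} would upgrade this to $L^4((t_1,T);W^{1,2})$ and then, via the interpolation embedding \eqref{interpolationTheorem2}, to $\sup_t\|u(t)\|_a<\infty$ for $a<\frac{18}{5}$ (the paper's \eqref{1StimaNormaMigliore}), but no further. For any Duhamel/smoothing iteration with a nonlinearity bounded by $|u|^{\gamma_0}$, $\gamma_0=\max\{q,3\}$, to close, one needs a uniform-in-time bound in $L^{p_0}$ with $p_0>\frac{3}{2}(\gamma_0-1)$; this is exactly Amann's subcriticality condition $\gamma_0<1+\frac{2}{3}p_0$ that the paper invokes in Section 5. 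Since $q$ ranges up to $2^*-1=5$, one needs $p_0$ arbitrarily close to $6$, so an $L^2$ bound, and even the $L^{18/5-\varepsilon}$ bound, fail precisely when $q\ge\frac{17}{5}$. That regime is the crux of the theorem, and the paper handles it by a separate, much heavier argument: the stronger a priori bound $\sup_{[\delta,T)}\|u(t)\|_a<\infty$ for all $a<q+1$ (estimate \eqref{2StimaNormaMigliore} of Proposition \ref{aPriori}), proved in Lemma \ref{proofTinfinito} by a bootstrap on the time-integrals $\int_\delta^T\|u\|_{q+1}^{\beta(q+1)}dt$ that uses \eqref{seiBIS}, maximal Sobolev regularity \eqref{MaximalSobolev}, the Hardy--Littlewood--Sobolev inequality and the embedding \eqref{interpolationTheorem1}; this lemma needs $q\ge 3$ for $i=2$, which is available since $\frac{17}{5}>3$. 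Global existence and the $W^{s,p}$ bounds then come from Amann's abstract criterion applied with $p_0<q+1$, not from a direct Duhamel iteration. Your proposal contains no substitute for this machinery, and no case distinction between $q<\frac{17}{5}$ and $q\ge\frac{17}{5}$ at all.

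The second device you propose, the ``good times'' $s_n$ with $\|u_t(s_n)\|_2\to0$, has an additional circularity. The quadratic inequality does bound $\|u(s_n)\|_{1,2}$ at those times, but to propagate the bound across $[s_n,s_{n+1}]$ you invoke ``the finite-interval estimate above,'' i.e.\ the very bootstrap that is the gap; the energy cannot fill this in either, because $E_2(u(t))\le E_2(u(s_n))$ is only an \emph{upper} bound on $E_2$, and both nonlinear terms enter $E_2$ with negative sign, so an upper bound on $E_2$ does not control $\|u(t)\|_{1,2}$. Note also that good times with $\|u_t(s_n)\|_2\to 0$ exist only when $T=\infty$: on a finite interval, finiteness of $\int_0^T\|u_t\|_2^2\,dt$ forces no smallness of $\|u_t\|_2$ near $T$, so in the case $T<\infty$ --- the case where the a priori bound is actually needed to rule out blow-up --- the device yields nothing. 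Both conclusions of the theorem ($T=\infty$ and the uniform $W^{s,p}$ bounds with compactness in $C^1(\bar\Omega)$) therefore rest on an unproved step in your argument.
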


$$$$

\begin{theorem}\label{TeoGlobalSolution} Let $p>3.$\\
Let $u_0\in W^{1,p}_0(\Omega),$ $u(t)=\varphi (t, u_0)$ be the solution of \eqref{IBVP} and \eqref{nnlinearity} and $T=T(u_0).$ \\
Let $q\in [3, 2^*-1)$. If $$t \mapsto E_3(u(t))\ \mbox{ is bounded from below on }(0,T)$$ then 
the conclusions of Theorem \ref{1TeoGlobalSolution} are true.\end{theorem}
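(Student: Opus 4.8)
The plan is to exploit the gradient structure of the flow, which for the nonlinearity \eqref{nnlinearity} reads $u_t=-E_3'(u)$. First I would record the dissipation identity: for $t>t_1>0$ the solution enjoys the regularity $u\in C^1((0,T),L^p(\Omega))$ of Theorem \ref{teoLocal}(i), so $u_t(t)\in L^2(\Omega)$ and one may multiply the equation by $u_t$ and integrate. Recognizing that $\phi_uu$ is exactly the $L^2$-gradient of $u\mapsto\frac14\int_\Omega\phi_uu^2$ (which follows by symmetrizing the quartic form \eqref{nnlinearity2}), the right-hand side is $-\frac{d}{dt}E_3(u(t))$, whence $\frac{d}{dt}E_3(u(t))=-\|u_t(t)\|_{L^2(\Omega)}^2\le0$. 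Combined with the standing hypothesis that $E_3(u(\cdot))$ is bounded below, and with the resulting monotonicity $E_3(u(t))\le E_3(u(t_1))=:M$, this gives two facts used throughout: the energy stays in a bounded interval $[m,M]$, and $\int_{t_1}^{T}\|u_t\|_{L^2}^2\,dt\le M-m$ uniformly in $T$.

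The heart of the matter, and the step I expect to be the main obstacle, is an a priori bound in $W^{1,2}_0(\Omega)$, and this is where the restriction $q\ge3$ enters. Setting $H(t):=\frac12\|u(t)\|_{L^2}^2$ and testing the equation with $u$ gives $H'(t)=-\|u\|_{1,2}^2+\|u\|_{q+1}^{q+1}-\int_\Omega\phi_uu^2$, where $\|u\|_{1,2}^2:=\int_\Omega(|\nabla u|^2+u^2)$. Eliminating $\|u\|_{q+1}^{q+1}$ via the definition of $E_3$ yields the identity $H'(t)+(q+1)E_3(u(t))=\tfrac{q-1}{2}\|u\|_{1,2}^2+\tfrac{q-3}{4}\int_\Omega\phi_uu^2$. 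The decisive point is that for $q\ge3$ the coefficient $\tfrac{q-3}{4}$ of the nonnegative nonlocal term is itself nonnegative, so that term may simply be discarded to obtain $\tfrac{q-1}{2}\|u\|_{1,2}^2\le H'(t)+(q+1)M$. This places us in exactly the structural situation of the pure Newtonian problem, and I would close the estimate by the mechanism behind Theorem \ref{1TeoGlobalSolution}: bound $H'(t)=\langle u,u_t\rangle\le\|u\|_{1,2}\|u_t\|_{L^2}$, absorb by Young's inequality, and combine with the uniform dissipation bound $\int_{t_1}^T\|u_t\|_{L^2}^2\,dt\le M-m$. The delicate part is promoting this to a bound \emph{uniform in} $t$ on $(t_1,T)$: the focusing subcritical power $|u|^{q-1}u$ must be dominated, and it is precisely here that the Hardy--Littlewood--Sobolev inequality (Lemma \ref{Lemma1}), together with the embedding $\|u\|_{q+1}\le C\|u\|_{1,2}$ valid for $q+1\le2^*$, is used to reabsorb the nonlinear contributions into the energy.

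Once $\sup_{t\in(t_1,T)}\|u(t)\|_{1,2}<\infty$ is secured, the remaining conclusions follow by the bootstrap already developed for the Newtonian case. Writing $\phi_u$ as a Riesz potential of $u^2$ and applying the Hardy--Littlewood--Sobolev inequality, one obtains a polynomial bound $\|F_3(u)\|_{L^r}\le C\bigl(1+\|u\|_{1,2}^{\vartheta}\bigr)$ for suitable $r$ and exponent $\vartheta$, the subcriticality $q<2^*-1$ guaranteeing that $r$ may be taken large enough for the iteration to close. Feeding this into the variation-of-constants formula \eqref{eqIntegrale} and using the smoothing estimates $\|e^{-\tau A_p}\|_{L^r\to W^{s,p}}\le C\,\tau^{-\gamma}e^{-\tau}$ of the analytic semigroup generated by $A_p$ (the exponential factor coming from the $+\,Id$ in $A_p$), one upgrades the $W^{1,2}$ bound to a uniform bound of $u(t)$ in $W^{s,p}(\Omega)$ for every $s\in[1,2)$, and then in $W^{2,p}(\Omega)$.

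Finally, $T=+\infty$ follows from the blow-up alternative implicit in Theorem \ref{teoLocal}(iii): since $\mathcal G$ is open and $\varphi$ is a semiflow, a finite maximal time would force $\|u(t)\|_{1,p}\to\infty$ as $t\to T$, contradicting the bound just obtained. The relative compactness of $\{u(t):t\ge\delta\}$ in $C^1(\bar\Omega)$ then results from the uniform $W^{2,p}$ bound and the compact embedding $W^{2,p}(\Omega)\hookrightarrow\hookrightarrow C^1(\bar\Omega)$, valid since $p>3$. In this way the conclusions of Theorem \ref{1TeoGlobalSolution} are recovered, the only genuinely new ingredients being the sign computation forcing $q\ge3$ and the HLS-based control of the competing focusing power term.
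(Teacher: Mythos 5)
Your energy identities and the sign computation are exactly the paper's starting point: testing with $u$ and eliminating $\|u\|_{q+1}^{q+1}$ through $E_3$ is the content of the paper's inequality \eqref{sei} (there one takes $K\in[4,q+1]$ in \eqref{generaleK}, which is precisely where $q\ge 3$ enters), and your dissipation facts are \eqref{uno}, \eqref{quattroMia}, \eqref{cinque}. The genuine gap is in the step you yourself flag as ``delicate''. From $\tfrac{q-1}{2}\|u\|_{1,2}^2\le H'(t)+(q+1)M$ and $H'\le\|u\|_{1,2}\|u_t\|_2$, Young's inequality gives only $\|u(t)\|_{1,2}^2\le C\bigl(1+\|u_t(t)\|_2^2\bigr)$, and the dissipation bound makes the right-hand side \emph{integrable} in $t$, not bounded. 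On a finite maximal interval $(0,T)$ --- which is exactly the case one must handle in order to prove $T=+\infty$ --- there is no reason for $\|u_t(t)\|_2$ to stay bounded, so no uniform-in-time $W^{1,2}$ bound follows from this mechanism. Your proposed repair (HLS together with $\|u\|_{q+1}\le C\|u\|_{1,2}$, ``reabsorbing the nonlinear contributions into the energy'') cannot work as stated: $E_3$ contains the term $-\frac{1}{q+1}\|u\|_{q+1}^{q+1}$ with the unfavorable sign, so two-sided bounds on $E_3$ do not control $\|u\|_{1,2}$. Indeed the paper never proves a pointwise-in-time $H^1$ bound when $T<\infty$; everything downstream in your proposal is conditional on a claim that is essentially as hard as the theorem itself.

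What the paper does instead is weaker but sufficient. Using the auxiliary bound $\sup_t\|u(t)\|_2<\infty$ (itself proved by a Gronwall-type argument, \eqref{tre}), the pointwise inequality is squared to $\|u(t)\|_{1,2}^4\le C\bigl(1+\|u_t(t)\|_2^2\bigr)$ and then only \emph{integrated}, giving $\int_0^T\|u\|_{1,2}^4\,dt<\infty$; the Cazenave--Lions space-time interpolation embedding \eqref{interpolationTheorem2}, $W^{1,2}(J,L^2(\Omega))\cap L^{4}(J,W^{1,2}(\Omega))\hookrightarrow L^\infty(J,L^\rho(\Omega))$, then converts the two integral bounds into $\sup_{(0,T)}\|u(t)\|_a<\infty$ for $a<\tfrac{18}{5}$ only. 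This is enough for Amann's global-existence criterion (polynomial bound with $\gamma_0=\max\{3,q\}$ from Lemma \ref{stimanonlinearitap}, plus an $L^{p_0}$ bound with $\gamma_0<1+\tfrac23 p_0$) precisely when $\tfrac32(q-1)<\tfrac{18}{5}$, i.e. $q<\tfrac{17}{5}$. For $q\in[\tfrac{17}{5},2^*-1)$ the paper needs a genuinely additional argument --- the bootstrap of Lemma \ref{proofTinfinito}, combining \eqref{seiBIS}, the maximal Sobolev regularity \eqref{MaximalSobolev}, Quittner's abstract bootstrap lemma and once more the Hardy--Littlewood--Sobolev inequality --- to reach $\sup_{[\delta,T)}\|u\|_a<\infty$ for all $a<q+1$ (this is where the paper needs $q>3$ for $i=3$, consistent since $\tfrac{17}{5}>3$). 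Your proposal has no counterpart to this two-regime analysis, which is the actual core of the proof.
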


$$$$

The paper is organized as follows.
\\
Section 2 is for notations and preliminaries, in particular we recall  the Hardy-Littlewood-Sobolev inequality involving the nonlocal term (see Lemma \ref{Lemma1}).
\\
Section 3 is related to the proof of the local existence result.
\\
The main section of this paper is Section 4. Here we prove polynomial bounds for the nonlinearity $F_i$ (Lemma  \ref{stimanonlinearitap}) and a priori bounds for the solutions (Proposition \ref{aPriori}).
\\
Last Section 5 contains the proof of Theorem \ref{1TeoGlobalSolution},  Theorem \ref{2TeoGlobalSolution} and  Theorem \ref{TeoGlobalSolution}.
$$$$
\section{Notations and preliminaries}
Let us fix some notations.
\\
$L^r(\Omega),$ $W^{s,r}(\Omega),$ $W^{s,r}_0(\Omega)$ are the usual Lebesgue and the Sobolev or Sobolev-Slobodeckii spaces. 
\\
$C^{k+\alpha}(\bar \Omega),$ with $k\in \N$ and $\alpha\in (0,1)$ is the Banach space of all the functions belonging to $C^{k}(\bar\Omega)$ whose $k$-th order partial derivatives are  uniformly $\alpha$-H\"older continuous in $\bar\Omega.$ 
\\
We denote by $\|\cdot\|_r, \|\cdot\|_{s,r},\|\cdot\|_{C^{k+\alpha}}$   the usual norms in $L^r(\Omega),$ $W^{s,r}(\Omega), C^{k+\alpha}(\bar \Omega)$ respectively.\\
Let $p'$ denote the dual exponent to $p\in (1,\infty),$ namely $\frac{1}{p}+\frac{1}{p'}=1.$\\
By $C$ we denote various positive constants which may vary from step to step.

$$$$

We shall use the following interpolation embeddings
\begin{equation}\label{interpolationTheorem1} W^{1,2}(J,L^2(\Omega))\cap L^{\beta(q+1)}(J, L^{q+1}(\Omega))\hookrightarrow L^{\infty}(J, L^{a}(\Omega)),
\end{equation}
\begin{equation}\label{interpolationTheorem2} W^{1,2}(J,L^2(\Omega))\cap L^{4}(J, W^{1,2}(\Omega))\hookrightarrow L^{\infty}(J, L^{\rho}(\Omega)),
\end{equation}

where $J$ is  a compact interval, $a\in (1, q+1-\frac{q-1}{\beta+1})$ and $\rho\in (1, \frac{18}{5})$
(for the proof see \cite[Appendix]{CazenaveLions} or \cite{QuittnerGen}).

$$$$
We also recall that the maximal Sobolev regularity property holds for problem \eqref{IBVP} (see \cite[p. 188]{AmannBook} or \cite[Appendix E, p. 470]{QuittnerSoupletBook}). Hence, given a compact interval $J$ and $F_i\in L^a(J, L^b(\Omega))$, $1<a,b<\infty,$ the solution $u$ of \eqref{IBVP} satisfies 

\begin{equation}\label{MaximalSobolev} \|u\|_{W^{1,a}(J, L^b(\Omega))}+\|u\|_{L^a(J, W^{2,b}(\Omega))}\leq C\left(\|u_0\|_{W^{s,b}(\Omega)}+\|F_i\|_{L^a(J, L^b(\Omega))}\right),
\end{equation}

provided $s>2(1-1/a).$

$$$$

The following lemma will play a crucial role in the proof of the a priori estimates in Section 4

\begin{lemma}\label{Lemma1}
If $u\in L^{2m}(\Omega)$ then $\phi_u$ is well-defined and belongs to $L^r(\R^3)$ with
\begin{equation}\label{HLS_nel_mio_caso}\|\phi_u\|_{L^r(\R^3)}\leq C\|u\|^2_{2m},\quad 
\frac{1}{m}+\frac{1}{3}=1+\frac{1}{r},\quad 1<m,r<\infty.\end{equation}
\end{lemma}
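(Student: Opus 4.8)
The plan is to recognize this lemma as a direct consequence of the classical Hardy--Littlewood--Sobolev (HLS) inequality. Recall that $\phi_u(x) = \int_\Omega \frac{1}{|x-y|} u^2(y)\,dy$ is the convolution of $u^2$ (extended by zero outside $\Omega$) with the Riesz-type kernel $|x|^{-1}$ on $\mathbb{R}^3$. The HLS inequality states that for the kernel $|x|^{-\lambda}$ on $\mathbb{R}^n$, convolution maps $L^s(\mathbb{R}^n)$ into $L^r(\mathbb{R}^n)$ boundedly whenever $\frac{1}{s} + \frac{\lambda}{n} = 1 + \frac{1}{r}$, with $1 < s, r < \infty$ and $0 < \lambda < n$.

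First I would specialize to $n=3$ and $\lambda = 1$, so that $\frac{\lambda}{n} = \frac{1}{3}$, and identify the integrand function as $v := u^2$. The quantity $u^2$ lies in $L^m(\Omega)$ precisely when $u \in L^{2m}(\Omega)$, since $\|u^2\|_{L^m(\Omega)} = \|u\|_{2m}^2$. Setting $s = m$ in the HLS relation yields exactly $\frac{1}{m} + \frac{1}{3} = 1 + \frac{1}{r}$, which matches \eqref{HLS_nel_mio_caso}. The hypothesis $u \in L^{2m}(\Omega)$ guarantees $v = u^2 \in L^m(\mathbb{R}^3)$ after the zero-extension, so HLS applies and gives
$$\|\phi_u\|_{L^r(\mathbb{R}^3)} = \big\| |x|^{-1} * u^2 \big\|_{L^r(\mathbb{R}^3)} \leq C \, \|u^2\|_{L^m(\mathbb{R}^3)} = C \, \|u\|_{2m}^2,$$
which is the desired estimate. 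The restrictions $1 < m, r < \infty$ are inherited directly from the admissible range in HLS, and one checks $0 < \lambda = 1 < 3 = n$ trivially.

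There is essentially no obstacle here, as the statement is just HLS specialized to the relevant exponents; the only points warranting a sentence of care are that $\phi_u$ is genuinely well-defined (the convolution integral converges for a.e. $x$, which is part of the HLS conclusion together with the finiteness of the $L^r$ norm) and that extending $u^2$ by zero outside $\Omega$ does not increase the $L^m$ norm, so working on all of $\mathbb{R}^3$ in the convolution is harmless. I would state the HLS inequality with an explicit reference (e.g.\ Lieb--Loss) and then simply substitute the exponents, so the proof is a short one-paragraph verification rather than a computation.
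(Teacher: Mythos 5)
Your proposal is correct and follows essentially the same route as the paper: extend $u$ (equivalently $u^2$) by zero to $\mathbb{R}^3$, apply the classical Hardy--Littlewood--Sobolev inequality for the kernel $|x|^{-1}$ in dimension $3$ (citing Lieb--Loss), and identify $\|u^2\|_{L^m}=\|u\|_{2m}^2$ together with the exponent relation $\tfrac1m+\tfrac13=1+\tfrac1r$. No gaps; this matches the paper's proof step for step.
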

\begin{proof} 
Let $\tilde u$ the trivial extension (to $0$) of $u$ in $\R^3,$ then $\tilde u\in L^{2m}(\R^3)$ and
by Hardy-Littlewood-Sobolev inequality (see \cite[Theorem 4.3]{LiebLoss}) one has the following
$$\left\|\left(\frac{1}{|x|}\ast \tilde u^2\right)\right\|_{L^r(\R^3)}\leq C\|\tilde u\|^2_{L^{2m}(\R^3)},\quad \frac{1}{m}+\frac{1}{3}=1+\frac{1}{r},\quad 1<m,r<\infty.$$
The conclusion comes observing that $\|\tilde u\|_{L^{2m}(\R^3)}=\|u\|_{2m}$ and that
$$\left(\frac{1}{|x|}\ast \tilde u^2\right)(x)=\int_{\R^3}\frac{1}{|x-y|}\tilde u^2(y)dy=\int_{\Omega}\frac{1}{|x-y|}u^2(y)dy=\phi_u(x).$$
\end{proof}

%\begin{lemma}\label{HLS_NelMioCaso}
%In particular restricting to $\Omega$ :
%\begin{equation}\|\phi_u\|_{r}\leq C\|u\|^2_{2m},\quad \frac{1}{m}+\frac{1}{3}\leq 1+\frac{1}{r},\qquad 1<m\leq\infty,\;\;1\leq r<\infty
%\end{equation}
%\end{lemma}
%\begin{proof} From Lemma \ref{Lemma1}, the continuous embedding $\|u\|^2_{2m}\leq C\|u\|^2_{2m'},\; m'\geq m$ and  the trivial inequality $\|\phi_u\|_r\leq\|\phi_u\|_{L^r(\R^3)}.$
%\end{proof}
%
$$$$
\section{Proof of Theorem \ref{teoLocal}} 

\begin{lemma}\label{ipotesi_Hpstausigmal_diAmann}
 Let $p>3$ 
%
%\\\textcolor{red}{non capisco dove lo usa amann nella proof.... mi pare che non serve...}\\
%
and $\tau>\frac{3}{p}$ then $F_i:W^{\tau,p}(\Omega)\rightarrow L^{p}(\Omega),$ $i=1,2,3$ is locally Lipschitz continuous.
\end{lemma}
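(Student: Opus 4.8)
The plan is to reduce the whole statement to the single Sobolev embedding $W^{\tau,p}(\Omega)\hookrightarrow L^\infty(\Omega)$, which holds precisely because $\tau p>3=\dim\Omega$; this is exactly where the hypotheses $p>3$ and $\tau>\frac3p$ are used. Once we know that every element of $W^{\tau,p}(\Omega)$ is essentially bounded, and since $\Omega$ is bounded so that $L^\infty(\Omega)\hookrightarrow L^s(\Omega)$ for all $s$, the estimates become elementary: it suffices to show, on each ball $\{\|u\|_{\tau,p}\le M\}$, a bound $\|F_i(u)-F_i(v)\|_p\le C(M)\|u-v\|_{\tau,p}$, with $C(M)$ depending on the radius $M$. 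I would treat the power part and the nonlocal part separately, since $F_2$ and $F_3$ are sums of the two.

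For the power part $g(u)=|u|^{q-1}u$ (present in $F_2,F_3$), I would use that $t\mapsto|t|^{q-1}t$ is $C^1$ and hence Lipschitz on bounded intervals, since $q>1$. For $u,v$ with $\|u\|_\infty,\|v\|_\infty\le C(M)$ this gives the pointwise inequality $\big||u|^{q-1}u-|v|^{q-1}v\big|\le C(M)\,|u-v|$, and therefore $\|g(u)-g(v)\|_p\le C(M)\|u-v\|_p\le C(M)\|u-v\|_{\tau,p}$ via the trivial embedding $W^{\tau,p}\hookrightarrow L^p$.

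The nonlocal part $h(u)=\phi_u u$ is where the Hardy--Littlewood--Sobolev inequality (Lemma \ref{Lemma1}) is essential. I would split $h(u)-h(v)=\phi_u(u-v)+(\phi_u-\phi_v)v$ and estimate each summand in $L^p$ by Hölder's inequality combined with an $L^r$-bound on the potential coming from HLS. For the first summand, Lemma \ref{Lemma1} gives $\|\phi_u\|_r\le C\|u\|_{2m}^2$ for an admissible pair $(m,r)$ satisfying $\frac1m+\frac13=1+\frac1r$ with $1<m,r<\infty$, and then Hölder yields $\|\phi_u(u-v)\|_p\le\|\phi_u\|_r\|u-v\|_s$ with $\frac1p=\frac1r+\frac1s$. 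For the second summand I would exploit the linearity of the Newtonian potential together with $u^2-v^2=(u-v)(u+v)$: the HLS bound applied to the function $(u-v)(u+v)$ controls $\|\phi_u-\phi_v\|_r\le C\|u-v\|_{2m}\|u+v\|_{2m}$, after which a further Hölder estimate against $v$ closes the bound.

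The one point requiring care is the bookkeeping of exponents in this HLS-plus-Hölder step: one must choose $r$ \emph{finite} (HLS forbids $r=\infty$) together with a conjugate factor so that the product lands exactly in $L^p$, all while respecting $1<m,r<\infty$ and $\frac1m+\frac13=1+\frac1r$. This is the main obstacle, but it is resolved at once by the embedding into $L^\infty(\Omega)$ on a bounded domain: every intermediate norm $\|u\|_{2m}$, $\|u\pm v\|_{2m}$, $\|u-v\|_s$, $\|v\|_s$ is dominated by the corresponding $L^\infty$ norm and hence by a constant depending on $M$ times $\|u-v\|_{\tau,p}$ (for the factors carrying the difference) — so there is complete freedom in selecting $(m,r,s)$. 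Summing the contributions of the three summands, and adjoining the power term for $F_2,F_3$, gives $\|F_i(u)-F_i(v)\|_p\le C(M)\|u-v\|_{\tau,p}$ on the ball of radius $M$, which is exactly the asserted local Lipschitz continuity.
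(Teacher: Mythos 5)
Your proof is correct and, on the heart of the lemma --- the nonlocal term --- it is essentially the paper's own argument: the same splitting $\phi_u(u-v)+(\phi_u-\phi_v)v$, the same use of Lemma \ref{Lemma1} (HLS) on the potential, the identity $u^2-v^2=(u-v)(u+v)$ to control $\phi_u-\phi_v$, and the embedding of $W^{\tau,p}(\Omega)$ into the sup norm to close every estimate. The paper resolves your ``exponent bookkeeping'' concern by the concrete choice $m=\frac{3p}{2p+3}$, which makes the HLS exponent $r$ equal to $p$ exactly (admissible, i.e.\ $m>1$, precisely because $p>3$), and then pairs $\|\phi_u\|_p$ or $\|\phi_w\|_p$ with $C^0$-norms of the remaining factors; this is the specific instance of the freedom you describe, and it shows $p>3$ enters the HLS step itself, not only the embedding $W^{\tau,p}\hookrightarrow L^\infty$. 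The only genuine divergence is the power term: the paper simply cites Amann (Proposition 15.4 of \cite{Amann2}) for local Lipschitz continuity of $u\mapsto|u|^{q-1}u$, whereas you prove it directly from the pointwise Lipschitz bound of $t\mapsto|t|^{q-1}t$ on bounded intervals --- a more elementary, self-contained route that costs nothing since the $L^\infty$ bound is already available.
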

\begin{proof}
That $ W^{\tau,p}(\Omega)\ni u \mapsto |u|^{q-1}u \in L^p(\Omega)$ is well defined and locally Lipschitz can be found in \cite[Proposition 15.4]{Amann2}. 
\\
Here we show that $W^{\tau,p}(\Omega)\ni u \mapsto \phi_uu$ is well defined and locally Lipschitz continuous.

Let $u\in W^{\tau,p}(\Omega),$ first we prove that $\phi_uu\in L^p(\Omega).$ 
Indeed from the continuous embedding $W^{\tau,p}(\Omega)\hookrightarrow C^0(\bar{ \Omega})$ (since $\tau>\frac{3}{p}$) and using \eqref{HLS_nel_mio_caso} with $m:=\frac{3p}{2p+3}$ (indeed $m>1$ since $p>3$) we get
\begin{eqnarray*}
\|\phi_uu\|_p & \leq &\|u\|_{C^0}\|\phi_u\|_p\\
 &\leq & C\|u\|_{\tau,p}\|\phi_u\|_p \\
& \leq & C\|u\|_{\tau,p}\|u\|^2_{2m}\\
& \leq & C\|u\|_{\tau,p}\|u\|^2_{C^0}\\
&\leq & C\|u\|^3_{\tau,p}<\infty.
\end{eqnarray*}
Next, using again inequality \eqref{HLS_nel_mio_caso} with $m:=\frac{3p}{2p+3}$ and the continuous embedding $W^{\tau,p}(\Omega)\hookrightarrow C^0(\bar{ \Omega}),$ we prove the locally Lipschitz continuity. Indeed letting $u_i\in W^{\tau,p}(\Omega),$ $i=1,2$ $\|u_i\|_{\tau,p}\leq R,$ then
\begin{eqnarray*}
\|u_1\phi_{u_1}-u_2\phi_{u_2}\|_p &\leq& \|\phi_{u_1}(u_1-u_2)\|_p+\|u_2(\phi_{u_1}-\phi_{u_2})\|_p\\
&\leq& \|u_1-u_2\|_{C^0}\|\phi_{u_1}\|_p+ \|u_2\|_{C^0}\|\phi_{u_1}-\phi_{u_2}\|_p\\
&\leq& \|u_1-u_2\|_{\tau,p}\|\phi_{u_1}\|_p+ \|u_2\|_{\tau,p}\|\phi_{w}\|_p\qquad(\mbox{where }w^2:=|u_1^2-u_2^2|)\\
&\leq& C\|u_1-u_2\|_{\tau,p}\|u_1\|^2_{2m}+ C\|u_2\|_{\tau,p}\|u_1^2-u_2^2\|_m\qquad \mbox{(by \eqref{HLS_nel_mio_caso})}\\
&\leq& C\|u_1-u_2\|_{\tau,p}\|u_1\|^2_{C^0} + C\|u_2\|_{\tau,p}\|(u_1-u_2)(u_1+u_2)\|_m\\
&\leq& C\|u_1-u_2\|_{\tau,p}\|u_1\|^2_{\tau,p} + C \|u_2\|_{\tau,p}(\|u_1\|_{C^0}+\|u_2\|_{C^0})\|u_1-u_2\|_{C^0} \\
&\leq& C\|u_1-u_2\|_{\tau,p}. 
\end{eqnarray*} 
\end{proof}

$$$$
Next we prove Theorem \ref{teoLocal}.
$$$$

\begin{proof}[Proof of Theorem \ref{teoLocal}]
The first part of the proof can be derived from abstract results of Amann concerning local existence and regularity for semilinear parabolic IBVP (see \cite{Amann2}).
\\
Following Amann's notation we set 
$\mathscr A(t)\equiv \mathscr A=-\Delta +Id$ and
$\mathscr B(t)\equiv\mathscr B=Id.$ 
\\
Since $p>3,$  we can choose  $\tau\in (\frac{3}{p},1)$  in Lemma \ref{ipotesi_Hpstausigmal_diAmann}. As a consequence the hypothesis 
``$\mathscr H(p,s,\tau,\sigma,l)$'' in \cite{Amann2}  with values $s=0,$ $\sigma=1$
and $l\geq2$ is satisfied.
Hence the local existence, the integral equation \eqref{eqIntegrale} and the semiflow properties of $\varphi$  follow directly applying for each fixed $T>0$ Theorem 15.1 and Corollary 15.2 in \cite{Amann2} (we point out that $\mathscr A,$ $\mathscr B$ and  $F_i$ are not depending on $t$).
\\ 
The further regularity  as well as the continuous dependence property in stronger norm in point iv) can be derived from Theorem 51.7-Example 51.4 in the Appendix E of the book \cite{QuittnerSoupletBook} (see in particular Remark 51.8 (iii)).
\\
Last to prove that the solution is classical one can adapt the arguments in the Example 51.9 of \cite{QuittnerSoupletBook}, 
 we give here only a brief sketch of it and we refer to \cite{QuittnerSoupletBook} for further details.
Basically one considers the parabolic problem as a linear problem  with H\"older continuous right-hand side and applies parabolic Schauder estimates.
It's not difficult to show the H\"older continuity of the RHS once one knows that  $u\in C^{\rho}((0,T),C^{1+\rho}(\bar \Omega))$ for a certain $\rho\in (0,1).$ And this last result  follows from point iv) choosing $\lambda:=2-2\rho,$ for $\rho \in (0,1)$ and sufficiently small in order to have the embedding $W^{\lambda,p}(\Omega)\hookrightarrow C^{1+\rho}(\bar \Omega).$ 
%
%
%
%\\\textcolor{red}{se servono piu' dettagli posso moltiplicare per la cutoff, scrivere il nuovo problema con $\tilde f (t)$ esplicita e far vedere che %e holder ...... la parte power-type sta nell'esempio citato...... la parte non locale la copio sul foglio mio aggiunto}\\
\end{proof}

$$$$
\begin{remark}
We point out that the local existence results in Theorem \ref{teoLocal} hold actually for every $q\geq 1$ and not only for $q\in (1,5).$
\end{remark}

$$$$

\section{A priori estimates for solutions}
Throughout this section  $p>3.$ Moreover we consider a fixed  $u_0\in W^{1,p}_0(\Omega)$ and, for $i=1,2,3$ we let   $u_i(t)=\varphi_i (t, u_0)$  be the solution of \eqref{IBVP} with the nonlinearity $F_i$ defined respectively in \eqref{1nnlinearity}, \eqref{2nnlinearity} and \eqref{nnlinearity} and $T=T(u_0)$ be its maximal existence time. If no confusion seems likely, then we may shortly write $u$ instead of $u_i$ and also $F, E$ instead of $F_i, E_i.$

$$$$

Next Lemma gives a polynomial bound for the nonlinearity $F_i$ and it is proved using Lemma \ref{Lemma1}. 
$$$$
\begin{lemma}\label{stimanonlinearitap}
Let $1<r<\infty.$ 
Then

\begin{equation}\label{laPrima}\|F_1(v)\|_r\leq C\||v|^{3}\|_r\quad \forall\ v\in L^{3r}(\Omega);\end{equation}
moreover for $i=2,3$
\begin{equation}\label{laSeconda}\|F_i(v)\|_r\leq C\left(1+\||v|^{\kappa}\|_r\right)\quad\forall v\in L^{\kappa r}(\Omega)\ \end{equation}
where  $\kappa:=\max\{q,3\}.$
\end{lemma}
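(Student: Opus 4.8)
The plan is to treat the three factors of $v$ hidden in $F_1(v)=\phi_v v$ by combining H\"older's inequality with the Hardy--Littlewood--Sobolev bound of Lemma \ref{Lemma1}, and then to collapse all the resulting $L^\sigma$-norms into a single $L^{3r}$-norm using that $\Omega$ is bounded. Observe first that $\||v|^3\|_r=\|v\|_{3r}^3$, so the target \eqref{laPrima} reads $\|\phi_v v\|_r\le C\|v\|_{3r}^3$. I would split, for H\"older conjugate exponents $\frac1s+\frac1t=\frac1r$,
$$\|\phi_v v\|_r\le \|\phi_v\|_s\,\|v\|_t,$$
and then apply Lemma \ref{Lemma1} to the first factor to get $\|\phi_v\|_s\le C\|v\|_{2m}^2$ with $\frac1m+\frac13=1+\frac1s$. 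This produces the desired cubic homogeneity $\|\phi_v v\|_r\le C\|v\|_{2m}^2\|v\|_t$.

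The delicate point --- and the main obstacle --- is choosing $s$ so that the exponent $m$ in Lemma \ref{Lemma1} is admissible, i.e.\ $m>1$. Since $\frac1m=\frac23+\frac1s$, the condition $m>1$ is equivalent to $s>3$; on the other hand, in order to later dominate $\|v\|_t$ by $\|v\|_{3r}$ one needs $t\le 3r$, which amounts to $s\ge \frac{3r}{2}$. Both can be met simultaneously: for $r>2$ the choice $s=\frac{3r}{2}$ (so that $t=3r$) already gives $m=\frac{3r}{2(r+1)}>1$, while for $1<r\le 2$ any fixed $s\in(3,\infty)$ works, since then $\frac{3r}{2}\le 3<s$. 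One checks likewise that $2m\le 3r$ holds automatically for $r>1$. With such a choice, the bounded-domain embeddings $\|v\|_{2m}\le C\|v\|_{3r}$ and $\|v\|_t\le C\|v\|_{3r}$ (valid because $2m,t\le 3r$ and $|\Omega|<\infty$) yield $\|\phi_v v\|_r\le C\|v\|_{3r}^3=C\||v|^3\|_r$, which is \eqref{laPrima}.

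For the combined nonlinearities \eqref{laSeconda} with $i=2,3$, I would simply use the triangle inequality, $\|F_i(v)\|_r\le \||v|^{q-1}v\|_r+\|\phi_v v\|_r=\|v\|_{qr}^q+\|\phi_v v\|_r$, and handle the Newtonian term by the estimate just proved. Since $\kappa=\max\{q,3\}$, one has $3r\le\kappa r$ and $qr\le\kappa r$, so the bounded-domain embedding gives $\|v\|_{3r}\le C\|v\|_{\kappa r}$ and $\|v\|_{qr}\le C\|v\|_{\kappa r}$. It then remains to absorb the exponents $3$ and $q$, both $\le\kappa$, into $\kappa$ via the elementary inequality $\sigma^\alpha\le 1+\sigma^\kappa$, valid for $\sigma\ge 0$ and $0\le\alpha\le\kappa$; applying it with $\sigma=\|v\|_{\kappa r}$ gives $\|v\|_{3r}^3+\|v\|_{qr}^q\le C(1+\|v\|_{\kappa r}^\kappa)=C(1+\||v|^\kappa\|_r)$, which is \eqref{laSeconda}.
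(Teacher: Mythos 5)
Your proof is correct and follows essentially the same route as the paper: a H\"older splitting of $\phi_v v$, the Hardy--Littlewood--Sobolev bound of Lemma \ref{Lemma1} applied to $\phi_v$, and Lebesgue-space inclusions on the bounded domain to collapse everything into $\|v\|_{3r}$ (resp. $\|v\|_{\kappa r}$), with the triangle inequality handling $i=2,3$. The only cosmetic difference is parametrization: the paper writes the H\"older pair as $(r\alpha', r\alpha)$ with a single choice $\alpha>\max\{\tfrac32,\tfrac3r\}$ enforcing exactly your two constraints $s>3$ and $s\ge\tfrac{3r}{2}$, whereas you split into the cases $r>2$ and $1<r\le 2$.
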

\begin{proof}
First we prove \eqref{laPrima}. We fix $\alpha> \max\{\frac{3}{2},\frac{3}{r}\}$ and we define $m=\frac{3r\alpha}{2r\alpha+3}$ ($m>1$ since $\alpha>\frac{3}{r}$). From H\"older inequality, Lemma \ref{Lemma1} and Sobolev embeddings we obtain
\begin{eqnarray*}\nonumber \|F_1(v)\|_r &= & \|v(|x|^{-1}\ast v^2)\|_r\\\nonumber
&\stackrel{\scriptsize{\mbox{H\"older}}}{\leq}& C \|v\|_{r\alpha'}\  \|(|x|^{-1}\ast v^2)\|_{r\alpha}\\
&\stackrel{\scriptsize{\mbox{Lemma \ref{Lemma1}}}}{\leq} &C \|v\|_{r\alpha'}\  \|v\|^2_{2m}\\
&\stackrel{\scriptsize{\mbox{Sobolev emb.}}}{\leq}  & C\|v\|_{3r}\  \|v\|^2_{3}\\
&\stackrel{\scriptsize{\mbox{Sobolev emb.}}}{\leq}  &  C\|v\|^{3}_{3r}.
\end{eqnarray*}
where we have used the fact that $2m<3$ and $\alpha'<3$ (since by definition $\alpha>\frac{3}{2}$).
\\
Inequality \eqref{laSeconda} follows in a similar way, indeed for $i=2,3$ one has
\begin{eqnarray}\nonumber \|F_i(v)\|_r &\leq & \||v|^q\|_r + \|v(|x|^{-1}\ast v^2)\|_r\\\nonumber
&\stackrel{\scriptsize{\mbox{H\"older}}}{\leq}&\||v|^q\|_r + C \|v\|_{r\alpha'}\  \|(|x|^{-1}\ast v^2)\|_{r\alpha}\\
\label{qui}
&\stackrel{\scriptsize{\mbox{Lemma \ref{Lemma1}}}}{\leq} &\|v\|^q_{qr} + C \|v\|_{r\alpha'}\  \|v\|^2_{2m} \\\nonumber
&\stackrel{\scriptsize{\mbox{Sobolev emb.}}}{\leq} &\|v\|^q_{\kappa r} + C \|v\|_{\kappa r}\  \|v\|^2_{\kappa} \\\nonumber
&\stackrel{\scriptsize{\mbox{Sobolev emb.}}}{\leq}  & \|v\|^q_{\kappa r} + C \|v\|^3_{\kappa r}\  \\\nonumber
%&=& \|u\|^q_{rq} + C \|u\|_{rq}^3\\
&\leq & \tilde C (1+\|v\|_{\kappa r}^{\kappa})\nonumber
\end{eqnarray}

where we have used the fact that $\alpha'< \kappa $ (indeed $\alpha'<3$)  and also that  $2m<\kappa$ (since $2m<3$).
\end{proof}
%
%$$$$
%
%\begin{remark}
%If $q< 3,$ then
%$$\|F(u(t))\|_p\leq C\left(1+\||u(t)|^{3}\|_p\right)\quad \forall t\in [0,T).$$
%
%Indeed from \eqref{qui} we obtain ($q<3$ and still $\alpha', 2m <3$):
%$$\|F(u(t))\|_p\leq\|u\|^q_{qp} + C \|u\|_{p\alpha'}\  \|u\|^2_{2m}\leq C(\|u\|^q_{3p} + \|u\|_{3p}\  \|u\|^2_{3})\leq \left(1+\|u(t)\|^{3}_{3p}\right).$$
%We believe this estimate could be useful to study the Newtonian parabolic problem, namely an analogous to problem \eqref{IBVP} but with opposite sign in front of the nonlocal nonlinearity. In fact for this problem the assumption $q\geq 3$ seems not to be needed to obtain all the other results in this paper.
%\end{remark}

$$$$
$$$$

Next result is an a priori bound for $u_i,$ $i=1,2,3$ in a proper $L^a$-norm under the additional condition that the energy functional $E_i$ stays bounded from below along the trajectory. 
$$$$

\begin{proposition}\label{aPriori} Let $i=1,2$ or 3 and assume that $t \mapsto E_i(u_i(t))$  is bounded from below on $(0,T).$ Moreover let $q\in(1,2^*-1)$ when $i=2$ and $q\in [3,2^*-1)$ when $i=3.$
Then
\begin{equation}\label{1StimaNormaMigliore}
\sup_{(0,T)}\|u_i(t)\|_{a}<\infty,\qquad \mbox{ for all }\ a<\frac{18}{5}.
\end{equation}
 Moreover for $i=2,3$ and  $q\geq 3$ if $i=2,$ $q>3$ if $i=3,$ one has for every $\delta >0$
\begin{equation}\label{2StimaNormaMigliore}
\sup_{[\delta,T)}\|u_i(t)\|_{a}<\infty,\qquad \mbox{ for all }\ a< q+1.    %\color{red}{a<\max\{\frac{18}{5},q+1\}}.
\end{equation}
\end{proposition}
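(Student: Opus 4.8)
The plan is to exploit the gradient-flow structure of \eqref{IBVP}: the equation is the $L^2$-gradient flow $u_t=-E_i'(u)$ of $E_i$, so that one has the dissipation identity $\frac{d}{dt}E_i(u(t))=-\|u_t(t)\|_2^2$. Combined with the hypothesis that $E_i$ is bounded from below, together with the fact that it is nonincreasing, this yields at once the bound $\int_0^T\|u_t\|_2^2\,dt\le E_i(u_0)-\inf_{(0,T)}E_i<\infty$ and that $E_i(u(t))$ stays in the bounded interval $[\inf E_i,E_i(u_0)]$. The second basic ingredient I would use is obtained by testing the equation with $u$ itself, namely $\frac12\frac{d}{dt}\|u\|_2^2+\|u\|_{1,2}^2=\langle F_i(u),u\rangle$, where $\langle F_1(u),u\rangle=N(u)$, $\langle F_2(u),u\rangle=P(u)+N(u)$ and $\langle F_3(u),u\rangle=P(u)-N(u)$, with $N(u):=\int_\Omega\phi_u u^2$ and $P(u):=\int_\Omega|u|^{q+1}$.

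For \eqref{1StimaNormaMigliore} the key is to eliminate the nonlinear terms between these two identities. Expressing $\langle F_i(u),u\rangle$ through $E_i$ and $\|u\|_{1,2}^2$ and substituting into the testing identity, a short computation gives in every case a pointwise inequality of the form $\|u\|_{1,2}^2\le C\bigl(1+\|u_t\|_2\|u\|_2\bigr)$. For $i=1$ this is immediate; for $i=3$ it uses $q\ge 3$, so that the surviving multiple of $N$ carries a favourable sign and may be dropped; for $i=2$ the multiple of $N$ is favourable when $q\le3$ and, when $q>3$, is absorbed through the crude bound $N\le 2\|u\|_{1,2}^2+C$ coming from $E_2$ being bounded below. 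By Cauchy--Schwarz and Young this gives $\|u(t)\|_{1,2}^2\le C\bigl(1+\|u_t(t)\|_2^2\bigr)$. Integrating over the unit intervals $J_n:=[n,n+1]\cap(0,T)$ and using the dissipation bound shows $u\in L^2(J_n,W^{1,2})$ uniformly in $n$; since $W^{1,2}(J_n,L^2)\hookrightarrow L^\infty(J_n,L^2)$ with constant independent of $n$, this produces a uniform bound $M:=\sup_{(0,T)}\|u\|_2<\infty$. Re-inserting this bound yields the sharper $\|u\|_{1,2}^2\le M\|u_t\|_2+C$, hence $u\in L^4(J_n,W^{1,2})$ uniformly in $n$. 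Finally the interpolation embedding \eqref{interpolationTheorem2} on each $J_n$ gives $\sup_n\|u\|_{L^\infty(J_n,L^\rho)}<\infty$ for every $\rho<\tfrac{18}{5}$, which is exactly \eqref{1StimaNormaMigliore}.

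For \eqref{2StimaNormaMigliore} I would use the first bound as the starting point of a bootstrap. Under the stronger conditions ($q\ge 3$ for $i=2$, $q>3$ for $i=3$) the same algebraic elimination now also controls the nonlocal term: one gets $N\le C(1+\|u_t\|_2)$ and then, from the energy identity, $P\le C(1+\|u\|_{1,2}^2+N)$, so that $P=\|u\|_{q+1}^{q+1}\in L^2(J_n)$ uniformly. This is the base case $\beta=2$ of the embedding \eqref{interpolationTheorem1}, already giving $\sup_{[\delta,T)}\|u\|_a<\infty$ for $a<q+1-\frac{q-1}{3}$. I would then iterate: each improved spatial bound $\sup_{[\delta,T)}\|u\|_{a_k}$ improves, via H\"older/interpolation in space together with the polynomial estimate of Lemma \ref{stimanonlinearitap} (hence once more the Hardy--Littlewood--Sobolev inequality of Lemma \ref{Lemma1}) and the maximal Sobolev regularity \eqref{MaximalSobolev}, the time-integrability exponent $\beta_k$ of $\|u\|_{q+1}$; feeding this back into \eqref{interpolationTheorem1} raises $a_{k+1}$. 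Since $a<q+1-\frac{q-1}{\beta+1}\to q+1$ as $\beta\to\infty$, the iteration exhausts every $a<q+1$, the strict inequality reflecting that $\beta=\infty$ is never attained; the restriction to positive initial time $\delta>0$ is what licences the use of the smoothing in \eqref{MaximalSobolev}.

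I expect the main obstacle to be the combined nonlinearity $F_2$, where the Newtonian term enters the energy with the same (lowering) sign as the power term: here the favourable sign exploited for $i=1,3$ is lost, the elimination only closes for $q\ge 3$, and every step of the bootstrap must re-absorb $N$ through the crude energy bound and a fresh application of Lemma \ref{Lemma1}. The two delicate points are keeping all constants uniform in $n$, so that the local-in-time estimates assemble into a genuine supremum over $(0,T)$ (with $T=+\infty$ allowed), and verifying that the exponents $\beta_k$ actually diverge rather than saturating below a finite value, so that the endpoint $a\to q+1$ is reached.
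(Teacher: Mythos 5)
Your core strategy coincides with the paper's own proof: the dissipation identity together with the lower bound on $E_i$ gives $\int_0^T\|u_t\|_2^2\,dt<\infty$ (the paper's \eqref{uno}, \eqref{cinque}); testing with $u$ and eliminating the nonlinear terms through the energy is exactly the paper's \eqref{sei}--\eqref{seiBIS}, and your sign discussion for $i=2,3$ (favourable sign for $i=3$ when $q\ge3$, absorption for $i=2$ when $q>3$) corresponds to the paper's choice of the parameter $K=\min\{4,q+1\}$, which kills the Newtonian term exactly; the interpolation embeddings \eqref{interpolationTheorem2} and \eqref{interpolationTheorem1} then give \eqref{1StimaNormaMigliore} and the base case $\beta=2$; and your bootstrap raising the time-integrability of $\|u\|_{q+1}$ via Lemma \ref{stimanonlinearitap}, H\"older/interpolation and the maximal Sobolev regularity \eqref{MaximalSobolev} is precisely the paper's Lemma \ref{proofTinfinito}. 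The point you flag about the increments $\beta_{k+1}-\beta_k$ possibly saturating is settled there by showing one can always take $\tilde\beta=\beta+h$ with a fixed $h\in(0,2)$, provided $a$ is chosen close enough to $a(\beta)$ so that $\theta s'\le\frac{q+1}{q}$ in \eqref{condi}.

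The one genuine difference is organizational, and for the second bound it opens a gap. The paper splits the proof: when $T=\infty$, the monotone bounded energy yields a uniform $W^{1,2}$ bound, hence $\sup_t\|u\|_{2^*}<\infty$, which settles both \eqref{1StimaNormaMigliore} and \eqref{2StimaNormaMigliore} at once (since $q+1<2^*$), so the bootstrap is only ever run on the single interval $[\delta,T)$ with $T<\infty$, where \eqref{MaximalSobolev} produces exactly one initial-datum term $\|u_i(\delta)\|_{2,\frac{q+1}{q}}$, finite by the regularity in Theorem \ref{teoLocal}. You instead run everything on unit intervals $J_n$ with $T=+\infty$ allowed. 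For \eqref{1StimaNormaMigliore} this works, and is in fact more robust than the paper's $T=\infty$ argument (which invokes $\frac{d}{dt}E_i(u_i(t))\to0$, a property that does not follow merely from monotonicity and boundedness of the energy). But for \eqref{2StimaNormaMigliore} with $T=+\infty$ your bootstrap does not close as described: each application of \eqref{MaximalSobolev} on $J_n=[n,n+1]$ carries the term $\|u_i(n)\|_{W^{s,\frac{q+1}{q}}}$ at the left endpoint, and a bound on these norms uniform in $n$ is not available---it is of the same strength as (indeed stronger than) the a priori bound being proved, so the constants in your scheme are not uniform in $n$ and the suprema do not assemble. Repairing this requires an extra ingredient: a cutoff in time removing the initial data from the maximal regularity estimate, or maximal regularity on the half-line (the paper states \eqref{MaximalSobolev} only for compact $J$), or simply the paper's route of dispatching $T=\infty$ separately before any bootstrap.
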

$$$$
\begin{remark}\label{remarkDifferenze}
\begin{itemize}
\item[(i)] A priori estimates for solutions of parabolic equations with a power-type nonlinearity have been proved among others by \cite{CazenaveLions, Quittner}. Moreover we refer to \cite{QuittnerGen} for a priori bounds relating to solutions of more general superlinear parabolic problems, subcritical where also some nonlocal problem has been studied. Anyway as far as we know Proposition \ref{aPriori} is the first  result in this direction for nonlinearities that involve a Newtonian nonlocal  term  \eqref{nnlinearity2}.

We point out that differently with \cite{CazenaveLions, Quittner} here we are not assuming that the solution is a priori global (indeed in next section we will need  estimates \eqref{1StimaNormaMigliore} and \eqref{2StimaNormaMigliore}  exactly to prove that the solutions are global).

As a consequence we need here to impose the additional condition on the lower bound of the energy along the trajectory (a condition that  is instead obtained for free in \cite{CazenaveLions, Quittner} by a blow-up argument for global solutions).

For this reason the bound for $\|u_i(t)\|_a$ we obtain in Proposition \ref{aPriori} can not depend simply  on the norm of the initial condition  (and on $\delta$ of course) like in \cite{CazenaveLions} 
%
%(for $q\in (1,q_{CL}),$ $q_{CL}=\frac{17}{5}$)
%
or  \cite{Quittner}
%
% (where the result has been improved for all  for $q\in (1,5)$)
%
, but its dependence  on the initial condition $u_0$ must be more complicated.
Hence in our proof we fix $u_0$ and we do not investigate the way the bound depends on its norm. 

We underline that in \cite{QuittnerGen}  no condition on the lower bound of the energy is imposed and the solutions are not assumed to be a priori global. Anyway a blow-up argument gives in this case a lower bound on the energy only  for $t<T-\lambda$ where $\lambda>0$ and so  the a priori bounds obtained in \cite{QuittnerGen} are of the form $\|u(t)\|_a<C_{\lambda}$ for $t<T-\lambda.$

Last we remark that the assumption we  add on the lower bound of the energy is totally reasonable, see for instance \cite{IanniSignChanging} where the trajectories under consideration lie in sets on which the energy stays strictly positive.\\

\item[(ii)]
Observe that when $i=3$ no a priori bound is obtained for $q<3,$ for this reason in  Theorem \ref{TeoGlobalSolution} we will restrict to the case $q\geq 3.$
 
Moreover for both the  ``combined" nonlinearity, namely when $i=2,3,$ the bound \eqref{2StimaNormaMigliore} holds only when $q$ is big enough ($q\geq 3$ and $q>3$ respectively), while for the other values of $q$ we only have the weaker bound \eqref{1StimaNormaMigliore} (same bound as in the ``pure Newtonian" case). Anyway, 
as we will see in Section 5, in order to obtain global existence the bound \eqref{1StimaNormaMigliore} is sufficient when considering small $q$ (precisely $q<17/5$), while only for bigger values of $q$ we need a stronger bound like \eqref{2StimaNormaMigliore}. Hence, combining both the results we are eventually able to cover all the different values of $q$ considered in Theorem \ref{2TeoGlobalSolution} and \ref{TeoGlobalSolution} respectively.\\

\item[(iii)] 
The proof is divided into two main parts. The first part, concerning the case of a priori global solutions ($T=\infty$) follows quite immediately from a monotonicity property of the energy functional along the flow. The second part, concerning the case $T<\infty,$ needs a more careful analysis. 

Precisely the proof of \eqref{1StimaNormaMigliore} is mainly inspired to arguments in \cite{CazenaveLions, Quittner}.
%\\
%
%, it basically relies on the fact that in each case $i=1,2,3$ the energy is decreasing along the trajectories %and  uses of interpolation embedding (see \eqref{interpolationTheorem2})   
%Moreover special attention to the presence to the nonlocal term and its onteraction with power term...devo dimo %la 15......case 3....only $q\geq 3$,
% \\
 
To prove the stronger bound \eqref{2StimaNormaMigliore} we need a bootstrap argument involving the maximal Sobolev regularity property in the spirit of \cite{Quittner, QuittnerGen}. Besides due to the presence of the nonlocal Newtonian term we will need once more the Hardy-Littlewood-Sobolev inequality (see the proof of Lemma  \ref{proofTinfinito}).
%\\\\
%quindi anche qui restrizioni su $q\geq 3 $ o  maggiore di 3 per via della 16.....
%\\\\
%(il bound e' piu forte ma vale solo su certi q che pero' come gia detto so propio quelli che mi servono!!!!)
%\\
%
%
% CAMBIA O TOGLI STA PARTE: Anyway our arguments will be different since, as already observed,  we are not looking %for bounds depending only on the norm of the initial datum.
\end{itemize}
\end{remark}
$$$$

In order to prove Proposition \ref{aPriori} we need the following lemma\\

\begin{lemma}
Let $i=1,2,3.$ The function $t\mapsto E_i(u_i(t))$ is continuous in $[0,T)$ and $C^1$ in $(0,T),$ moreover
\begin{equation}\label{uno}
 \frac{d}{dt}E_i(u_i(t))=-\|{u_i}_t(t)\|_2^2\quad\mbox{ for }t\in (0,T),
\end{equation}

\begin{equation}\label{quattroMia}
E_i(u_i(t))\leq E_i(u_0)\quad  \mbox{ for } t\in[0,T).
\end{equation}
Furthermore
\begin{equation}\label{sei}
 \|u_i(t)\|_{1,2}^2\leq C(1+\|u_i(t){u_i}_t(t)\|_1) \quad\mbox{ for }t\in(0,T),\end{equation}
where when $i=3$ we are also assuming that $q\geq 3.$ 
\\
Moreover, for $i=2,3$ 
 \begin{equation}\label{seiBIS}
 \|u_i(t)\|_{q+1}^{q+1}\leq C(1+\|u_i(t){u_i}_t(t)\|_1) \quad\mbox{ for }t\in(0,T),
%&& \int_{\Omega}\phi_u(t)u^2(t)\leq C(1+\|u(t)u_t(t)\|_1) \quad\mbox{ for }t\in(0,T)
\end{equation}
where when $i=3$ we are also assuming that $q>3.$
\end{lemma}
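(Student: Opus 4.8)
The plan is to exploit the gradient-flow structure of \eqref{IBVP}: each $F_i$ is, up to sign, the $L^2$-gradient of the corresponding energy, so the equation reads $u_t=-E_i'(u)$ and all the identities follow by testing the equation against suitable multipliers. First I would record that on $(0,T)$ the solution is classical with $u\in C^1((0,T),L^p(\Omega))\cap C((0,T),W^{2,p}(\Omega))$ and, since $p>3$, $u(t)\in C^0(\bar\Omega)$ and $u_t(t)\in L^2(\Omega)$; this makes all the integrations by parts below legitimate. Multiplying the equation by $u_t$ and integrating over $\Omega$ (using $u=0$ on $\partial\Omega$) gives
$$\|u_t\|_2^2+\tfrac12\tfrac{d}{dt}\int_\Omega(|\nabla u|^2+u^2)=\int_\Omega F_i(u)\,u_t.$$
The only nonstandard term is the nonlocal one: by the symmetry of the kernel $1/|x-y|$ one checks $\int_\Omega\phi_u u\,u_t=\tfrac14\tfrac{d}{dt}\int_\Omega\phi_u u^2$, while $\int_\Omega|u|^{q-1}u\,u_t=\tfrac{1}{q+1}\tfrac{d}{dt}\int_\Omega|u|^{q+1}$. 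Collecting the terms with the correct signs reproduces exactly $\tfrac{d}{dt}E_i(u(t))$, which yields \eqref{uno}. Continuity of $t\mapsto E_i(u(t))$ on $[0,T)$ follows from $u\in C([0,T),W^{1,p}_0(\Omega))$ (the semiflow of Theorem \ref{teoLocal}) together with the continuity of $E_i$ on $W^{1,p}_0(\Omega)$, which for the nonlocal term is furnished by Lemma \ref{Lemma1} and for the power term by the Sobolev embedding; the $C^1$ regularity on $(0,T)$ is then read off \eqref{uno}. Integrating \eqref{uno} on $[t_1,t]$ and letting $t_1\to0^+$ by continuity gives the monotonicity \eqref{quattroMia}.

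For \eqref{sei} I would instead test the equation against $u$ itself, obtaining
$$\|u(t)\|_{1,2}^2=\int_\Omega u\,F_i(u)-\int_\Omega u\,u_t,$$
where $\int_\Omega u F_1(u)=\int_\Omega\phi_u u^2$, $\int_\Omega u F_2(u)=\int_\Omega|u|^{q+1}+\int_\Omega\phi_u u^2$ and $\int_\Omega u F_3(u)=\int_\Omega|u|^{q+1}-\int_\Omega\phi_u u^2$. The strategy is to use the definition of $E_i$ to trade the sign-indefinite quartic term $\int_\Omega\phi_u u^2$ for $\|u\|_{1,2}^2$, $\int_\Omega|u|^{q+1}$ and $E_i$; after this substitution $\|u\|_{1,2}^2$ is expressed as a linear combination of $E_i(u)$, $\int_\Omega u\,u_t$ and a multiple $\tfrac{3-q}{q+1}\int_\Omega|u|^{q+1}$ of the power term. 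When that last coefficient is $\leq0$ — i.e.\ for $i=1$ (where the power term is absent) and for $i=2,3$ when $q\geq3$ — the power term may simply be dropped, and the \emph{upper} bound \eqref{quattroMia} gives $\|u\|_{1,2}^2\leq C+\int_\Omega u\,u_t\leq C(1+\|u\,u_t\|_1)$. The delicate case is $i=2$ with $q<3$, where the coefficient is positive: there I would additionally invoke the assumed \emph{lower} bound on $E_2$ along the trajectory to estimate $\int_\Omega|u|^{q+1}\lesssim\|u\|_{1,2}^2+1$, and then absorb the resulting term $\tfrac{3-q}{2}\|u\|_{1,2}^2$ into the left-hand side, which is possible precisely because $q>1$.

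Finally, for \eqref{seiBIS} I would isolate $\int_\Omega|u|^{q+1}$ rather than $\|u\|_{1,2}^2$. For $i=3$ I eliminate $\int_\Omega\phi_u u^2$ between the energy identity and the tested equation to reach
$$\tfrac14\|u\|_{1,2}^2+\tfrac{q-3}{4(q+1)}\int_\Omega|u|^{q+1}=E_3(u)+\tfrac14\int_\Omega u\,u_t;$$
since $\|u\|_{1,2}^2\geq0$ and $E_3(u)\leq E_3(u_0)$ by \eqref{quattroMia}, this gives $\int_\Omega|u|^{q+1}\leq C(1+\|u\,u_t\|_1)$ as soon as $q>3$ (whence the strict inequality). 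For $i=2$ the bound \eqref{seiBIS} follows by combining the already-proved \eqref{sei} with the lower bound on $E_2$, which directly controls $\int_\Omega|u|^{q+1}$ by $\|u\|_{1,2}^2+1$. The main obstacle throughout is the combined nonlinearity $i=2$ (and the borderline value $q=3$): there the focusing signs of both the Newtonian and the power term force one to play the upper bound \eqref{quattroMia} against the assumed lower bound on the energy, and to choose the right linear combination of the two tested-equation identities so that the offending term appears with a favourable sign.
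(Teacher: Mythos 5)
Your treatment of \eqref{uno}, of the continuity statement, and of \eqref{quattroMia} coincides with the paper's (differentiate, use the symmetry of the kernel for the quartic term, integrate, let $t_1\to0^+$), and your proofs of \eqref{sei} and \eqref{seiBIS} are correct in the cases $i=1$, $i=2$ with $q\geq 3$, and $i=3$ under the stated restrictions $q\geq3$ resp.\ $q>3$: your elimination of $\int_\Omega \phi_u u^2$ between the tested equation and the energy is exactly the paper's one-parameter identity (see \eqref{generaleK} for $i=3$) specialized to $K=4$.

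The genuine gap is the case $i=2$ with $1<q<3$. The lemma asserts \eqref{sei} and \eqref{seiBIS} for $i=2$ with \emph{no} restriction on $q$ and, crucially, with \emph{no} hypothesis that $E_2$ is bounded from below along the trajectory: the only energy information the lemma may use is the upper bound \eqref{quattroMia}, which comes for free from monotonicity. (The lower bound on the energy is a hypothesis of Proposition \ref{aPriori}, not of this lemma; the lemma is deliberately unconditional, its constants depending only on $E_i(u_0)$.) Your argument for $q<3$ invokes ``the assumed lower bound on $E_2$'', so it proves a strictly weaker statement. The paper avoids this by not committing to the combination $K=4$: testing with $u_2$ and subtracting $K$ times the energy identity gives, for any $K>2$,
\begin{equation*}
\|u_2\|_{1,2}^2\ \leq\ \tfrac{2}{K-2}\|u_2\,{u_2}_t\|_1+\tfrac{2K}{K-2}E_2(u_2)-\tfrac{2(q+1-K)}{(K-2)(q+1)}\|u_2\|_{q+1}^{q+1}-\tfrac{4-K}{2(K-2)}\int_\Omega\phi_{u_2}u_2^2,
\end{equation*}
and the choice $K=\min\{4,q+1\}$ makes the coefficients of \emph{both} the power term and the Newtonian term nonpositive simultaneously, so both may be dropped using only $|u_2|^{q+1}\geq0$ and $\phi_{u_2}\geq0$; then \eqref{quattroMia} alone yields \eqref{sei} for every $q>1$. (Insisting on $K=4$, as you do, is what leaves the power term with a bad sign when $q<3$ and forces the extra hypothesis; note the same flexibility cannot rescue $i=3$ for $q<3$, since there the Newtonian term enters with the opposite sign, requiring $K\geq4$ \emph{and} $K\leq q+1$ --- whence the restriction $q\geq3$ in the statement.) Similarly, \eqref{seiBIS} for $i=2$ needs no lower bound: testing with $u_2$ gives
\begin{equation*}
\|u_2\|_{q+1}^{q+1}=\|u_2\|_{1,2}^2-\int_\Omega\phi_{u_2}u_2^2+\int_\Omega u_2\,{u_2}_t\ \leq\ \|u_2\|_{1,2}^2+\|u_2\,{u_2}_t\|_1,
\end{equation*}
and \eqref{sei} concludes, again for every $q>1$.
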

\begin{proof}
The continuity and the differentiability of $t\mapsto E_i(u_i(t))$ follow  from the embedding $W^{1,p}_0(\Omega)\hookrightarrow W^{1,2}_0(\Omega)$ and the fact $ u_i\in C^0([0,T), W^{1,p}_0(\Omega))$ and also that $u_i$ is a classical solution on $(0,T)$.
The \eqref{uno} follows differentiating and integrating by parts, for instance in case $i=3$ we get
\begin{eqnarray*}
\dot E_3
&=&\frac{d}{dt}E_3(u_3)=
\frac{d}{dt}\int_{\Omega}\left(\frac{1}{2}|\nabla u_3|^2+\frac{1}{2}|u_3|^2-\frac{1}{q+1}|u_3|^{q+1}+\frac{1}{4}\phi_{u_3}u_3^2 \right)dx\nonumber\\
&=& \int_{\Omega}\left(\nabla u_3\nabla {u_3}_t +uu_t-|u_3|^{q-1}u_3{u_3}_t+\phi_{u_3}u_3{u_3}_t\right)dx\nonumber\\
&=& \int_{\Omega}\left(-\Delta u_3+u_3-|u_3|^{q-1}u_3+\phi_{u_3}u_3\right){u_3}_tdx\nonumber\\
&=& -\int_{\Omega}{u_3}_t^2 dx=-\|{u_3}_t\|_2^2.
\end{eqnarray*}
When $i=1,2$ the proof is similar.\\
\\
To prove \eqref{quattroMia} just observe that $t\mapsto E_i(u_i(t))$ is decreasing for $t>0$ (by \eqref{uno}) and continuous in $t=0.$
\\\\
Next we  prove \eqref{sei} in the three cases $i=1,2,3$. 
\\\\
Multiplying the equation ${u_1}_t=\Delta u_1- u_1+\phi_{u_1}u_1$ by $u_1$ and integrating by parts we obtain
$$\|u_1(t)\|_{1,2}^2\leq  \|u_1(t){u_1}_t(t)\|_1 + 4E(u_1(t)) ,$$
hence the conclusion in case $i=1$ follows from \eqref{quattroMia}.
\\\\
Similarly, multiplying the equation ${u_2}_t=\Delta u_2- u_2+|u_2|^{q-1}u_2+\phi_{u_2}u_2$ by $u_2$ and integrating by parts we obtain for any constant $K>2$
\begin{eqnarray*}\|u_2(t)\|_{1,2}^2 &\leq&  \frac{2}{K-2}\|u_2(t){u_2}_t(t)\|_1 +\frac{2K}{K-2} E(u_2(t))- \frac{2(q+1-K)}{(K-2)(q+1)}\|u_2(t)\|_{q+1}^{q+1}\\
&&-\frac{4-K}{2(K-2)}\int_{\Omega}\phi_{u_2}(t)u_2^2(t),\end{eqnarray*}
hence taking $K:= \min\{4,q+1\},$ since $\phi_{u_2}\geq 0$ by definition, we have
$$\|u_2(t)\|_{1,2}^2\leq  \frac{2}{K-2}\|u_2(t){u_2}_t(t)\|_1 +\frac{2K}{K-2} E(u_2(t))$$
and the conclusion in case $i=2$ follows again from \eqref{quattroMia}.
\\\\ 
Last we consider the case $i=3.$ In this case, multiplying the equation ${u_3}_t=\Delta u_3- u_3+|u_3|^{q-1}u_3-\phi_{u_3}u_3$ by $u_3$ and integrating by parts we obtain
for any constant $K>2$
\begin{eqnarray}\label{generaleK}\|u_3(t)\|_{1,2}^2 &\leq&  \frac{2}{K-2}\|u_3(t){u_3}_t(t)\|_1 +\frac{2K}{K-2} E(u_3(t))- \frac{2(q+1-K)}{(K-2)(q+1)}\|u_3(t)\|_{q+1}^{q+1}\nonumber\\ &&+\frac{4-K}{2(K-2)}\int_{\Omega}\phi_{u_3}(t)u_3^2(t),\end{eqnarray}
hence, taking any $K\in [4,q+1]$ ($q\geq 3$ by assumption) we get
$$\|u_3(t)\|_{1,2}^2\leq  \frac{2}{K-2}\|u_3(t){u_3}_t(t)\|_1 +\frac{2K}{K-2} E(u_3(t)),$$
and so the conclusion comes from \eqref{quattroMia}.
\\
\\
The proof of \eqref{seiBIS} follows in a similar way. 

For $i=2$ we multiply the equation ${u_2}_t=\Delta u_2- u_2+|u_2|^{q-1}u_2+\phi_{u_2}u_2$ by $u_2,$  integrate by parts and use \eqref{sei} obtaining directly
\begin{eqnarray*}\|{u_2}(t)\|_{q+1}^{q+1}&&\leq \|u_2(t){u_2}_t(t)\|_{1}+\|u_2(t)\|_{1,2}^2-\int_{\Omega}\phi_{u_2}(t)u_2^2(t)\\
&&\leq \|u_2(t){u_2}_t(t)\|_{1}+\|u_2(t)\|_{1,2}^2\\
&&\leq C(1+\|u_2(t){u_2}_t(t)\|_{1}).\end{eqnarray*}

The case $i=3$  follows from \eqref{generaleK} taking any $K\in [4, q+1)$ (indeed $q>3$ by assumption)
and using once more \eqref{quattroMia}.
%
%    We multiply the equation ${u_3}_t=\Delta u_3- u_3+|u_3|^{q-1}u_3-\phi_{u_3}u_3$ by $u_2,$  integrate by parts
%$$\|u(t)\|_{q+1}^{q+1}\leq\frac{4(q+1)}{q-3}E(u(t))+\frac{q+1}{q-3}\|u(t)u_t(t)\|_{1}-\frac{q+1}{q-3}\|u(t)\|_{1,2}^2.$$
%Hence the conclusion follows from \eqref{quattroMia} since by assumption $q>3.$  
%
%
%
%
%
%and also
%\begin{eqnarray*}\int_{\Omega}\phi_u(t)u^2(t) &=& \frac{4}{q-3}\|u(t)u_t(t)\|_1+\frac{4(q+1)}{q-3}E(u(t))-\frac{2(q-1)}{q-3}\|u(t)\|_{1,2}^2\\
%&&\leq \frac{4}{q-3}\|u(t)u_t(t)\|_1+\frac{4(q+1)}{q-3}E(u(0)) \\
%&&\leq C(1+\|u(t)u_t(t)\|_1) 
% \end{eqnarray*}
\end{proof}
$$$$
$$$$
\begin{proof}[Proof of Proposition \ref{aPriori}. Case $T=\infty$]
$\;$\\\\

The $C^1$ function  $(0, +\infty)\ni t\mapsto E_i(u_i(t))$ is decreasing because of \eqref{uno} and bounded from below by assumption, hence $E_i(u_i(t))\searrow c\in\mathbb R$  and $\frac{d}{dt}E_i(u_i(t))\rightarrow 0$ as $t\rightarrow +\infty.$\\
Combining this with \eqref{uno} and \eqref{sei},  we get from H\"older inequality $$\|u_i(t)\|_{1,2}^2\leq C(1+\|u_i(t)\|_2\|{u_i}_t(t)\|_2)\leq C(1+\|u_i(t)\|_{1,2})\mbox{, for }t\in(0, +\infty)$$ namely
$$\|u_i(t)\|_{1,2}\leq C\quad \forall t\in[0,+\infty),$$
(we can consider $t=0$ because $u_i\in C^0([0,T), W^{1,2}_0(\Omega))$)
which implies, by Sobolev embedding  
$$\|u_i(t)\|_{2^*}\leq C\quad \forall t\in[0,+\infty).$$
and hence \eqref{1StimaNormaMigliore} and  \eqref{2StimaNormaMigliore} for $i=1,2,3$ respectively.
\end{proof}
$$$$
$$$$
From now on we consider $T<\infty.$
$$$$
\begin{lemma}
Assume that $t \mapsto E_i(u_i(t))$ is bounded from below on $(0,T).$ Let also $T<\infty.$ Then
\begin{equation}\label{tre}
 \sup_{t\geq 0}\|u_i(t)\|_2<\infty
\end{equation}

\begin{equation}\label{cinque}
 \int_{0}^{T}\|{u_i}_t(t)\|_2^2dt\leq C
\end{equation}
\end{lemma}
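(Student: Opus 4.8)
The plan is to establish the two bounds in sequence: first the dissipation estimate \eqref{cinque}, which follows almost immediately from the energy identity \eqref{uno}, and then the $L^2$-bound \eqref{tre}, which I will deduce from \eqref{cinque} by a Gronwall-type differential inequality for $t\mapsto\|u_i(t)\|_2^2$ in which the finiteness of $T$ is used in an essential way.

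For \eqref{cinque}, I would integrate the energy dissipation identity \eqref{uno}. Fixing $0<\varepsilon<t<T$ and integrating $\frac{d}{ds}E_i(u_i(s))=-\|{u_i}_s(s)\|_2^2$ over $[\varepsilon,t]$ gives
\[
\int_{\varepsilon}^{t}\|{u_i}_s(s)\|_2^2\,ds=E_i(u_i(\varepsilon))-E_i(u_i(t)).
\]
By \eqref{quattroMia} the right-hand side is at most $E_i(u_0)-E_i(u_i(t))$, and since $t\mapsto E_i(u_i(t))$ is bounded from below by hypothesis, say $E_i(u_i(t))\geq c$, the integral is bounded by $E_i(u_0)-c$ uniformly in $\varepsilon$ and $t$. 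Letting $\varepsilon\to 0^+$ (the continuity of $E_i$ at $0$ controls the behavior near the left endpoint) and $t\to T^-$, and applying monotone convergence, yields $\int_0^T\|{u_i}_s(s)\|_2^2\,ds\leq E_i(u_0)-c=:C<\infty$, which is \eqref{cinque}. Note that no upper bound on $T$ is needed here.

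For \eqref{tre}, I would set $g(t):=\|u_i(t)\|_2^2$, which is differentiable on $(0,T)$ thanks to the regularity $u_i\in C^1((0,T),L^p(\Omega))$ from Theorem \ref{teoLocal}, with $g'(t)=2\int_\Omega u_i(t){u_i}_t(t)\,dx$. By the Cauchy--Schwarz inequality $|g'(t)|\leq 2\|u_i(t)\|_2\|{u_i}_t(t)\|_2=2\,g(t)^{1/2}\|{u_i}_t(t)\|_2$, so that $\frac{d}{dt}g(t)^{1/2}\leq \|{u_i}_t(t)\|_2$. Integrating and using the continuity of $u_i$ in $L^2$ down to $t=0$,
\[
\|u_i(t)\|_2\leq \|u_0\|_2+\int_0^t\|{u_i}_s(s)\|_2\,ds\qquad\text{for }t\in[0,T).
\]
Now I apply Cauchy--Schwarz in the time variable: since $T<\infty$,
\[
\int_0^t\|{u_i}_s(s)\|_2\,ds\leq T^{1/2}\Big(\int_0^T\|{u_i}_s(s)\|_2^2\,ds\Big)^{1/2}\leq (TC)^{1/2},
\]
by \eqref{cinque}. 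Hence $\sup_{t\in[0,T)}\|u_i(t)\|_2\leq \|u_0\|_2+(TC)^{1/2}<\infty$, giving \eqref{tre}.

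The argument is essentially routine, so I do not expect a genuine obstacle; the only points requiring care are the justification of differentiating $g$ and of integrating the dissipation identity across the endpoints, both of which are handled by the continuity and $C^1$ regularity stated in the previous lemma and in Theorem \ref{teoLocal}. The one conceptual point worth emphasizing is the role of the hypothesis $T<\infty$: it enters precisely through the factor $T^{1/2}$ in the time-Cauchy--Schwarz step, which is why this lemma is stated for finite $T$, the complementary case $T=\infty$ having been treated separately (where the bound is instead obtained from $\frac{d}{dt}E_i(u_i(t))\to 0$).
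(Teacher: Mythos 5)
Your proof is correct, and for \eqref{cinque} it is exactly the paper's argument: integrate the dissipation identity \eqref{uno}, use \eqref{quattroMia} and the lower bound $E_{inf}$ on the energy. For \eqref{tre}, however, you take a genuinely different route. The paper does not use \eqref{cinque} at all: following an argument of Wei--Weth, it sets $h(t)=\|u_i(t)\|_2^2$, applies Young's inequality to get $\dot h\leq h-\dot E_i$, multiplies by the weight $e^{-t}$ and integrates, so the bound $h(t)\leq e^{T}\bigl(h(0)-E_{inf}+E_i(u_0)\bigr)$ comes from an exponential-weight Gronwall inequality, with $T<\infty$ entering through the factor $e^{T}$. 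You instead derive \eqref{tre} as a corollary of \eqref{cinque}: from $\frac{d}{dt}\|u_i(t)\|_2\leq\|{u_i}_t(t)\|_2$ and Cauchy--Schwarz in time you get $\sup_{[0,T)}\|u_i(t)\|_2\leq\|u_0\|_2+(TC)^{1/2}$, with $T<\infty$ entering through $T^{1/2}$. Both arguments are elementary and of comparable length; yours is slightly more economical in that it reuses \eqref{cinque} (and gives polynomial rather than exponential dependence on $T$), while the paper's is self-contained for \eqref{tre} and works directly with the squared norm, which avoids the one small technical point in your write-up: $g^{1/2}$ need not be differentiable at a time where $g(t)=0$. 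This is harmless and standard to fix, e.g.\ replace the pointwise differential inequality by the integral estimate $\|u_i(t_2)\|_2-\|u_i(t_1)\|_2\leq\|u_i(t_2)-u_i(t_1)\|_2\leq\int_{t_1}^{t_2}\|{u_i}_t(s)\|_2\,ds$ (valid since $u_i\in C^1((0,T),L^p(\Omega))$ and $L^p(\Omega)\hookrightarrow L^2(\Omega)$ on the bounded domain $\Omega$), or differentiate $(g+\varepsilon)^{1/2}$ and let $\varepsilon\to 0$.
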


\begin{proof}
We put $E_{inf}:=\inf_{t\geq 0} E_i(u_i(t))>-\infty.$
The following argument is similar to the one used in \cite[p. 89]{WeiWeth}. Let $h(t):=\|u_i(t)\|_2^2.$ By H\"older inequality and \eqref{uno} we have for $t\in (0,T)$
$$\frac{d}{dt}h=2\int_{\Omega}u_i{u_i}_tdx\leq 2\|u_i\|_2\|{u_i}_t\|_2\leq \|u_i\|_2^2+\|{u_i}_t\|_2^2=h-\dot E_i$$ and so 
$$\frac{d}{dt}(e^{-t}h(t))=e^{-t}[\dot h-h]\leq -e^{-t}\dot E_i\leq -\dot E_i.$$ Integrating in $0<\delta< t$ we have

\begin{eqnarray*} 
e^{-t}h(t)&&=e^{-\delta}h(\delta)+\int_{\delta}^t\frac{d}{ds}(e^{-s}h(s))ds\leq e^{-\delta}h(\delta)+\int_{\delta}^t-\dot E_i(u_i(s))ds\\
 	&&=
e^{-\delta}h(\delta)-E_i(u_i(t))+E_i(u_i(\delta))\leq e^{-\delta}h(\delta)-E_{inf}+E_i(u_i(\delta))
\end{eqnarray*}
Passing to the limit for $\delta\rightarrow 0^+$ we obtain
$$e^{-t}h(t)\leq h(0)-E_{inf}+E_i(u_0):=C,$$
namely $$h(t)\leq e^tC\leq e^TC:= C_{T}\mbox{ for }t\in[0,T),$$
which proves \eqref{tre}.
The bound in \eqref{cinque} follows immediately from \eqref{uno}:
$$\int_{0}^{T}\|{u_i}_t(t)\|_2^2dt= \int_{0}^{T}(-\dot E_i(t))dt=E_i(u_0)-\lim_{t\rightarrow T}E_i(t)\leq E_i(u_0) -E_{inf}=C(>0).$$
\end{proof}

$$$$
\begin{lemma}\label{proofTinfinito} Let $i=2$ or 3 and $q\geq 3$ if $i=2,$ $q>3$ if $i=3.$ Assume also that $t \mapsto E_i(u_i(t))$  is bounded from below on $(0,T)$ and that $T<\infty.$
Let $\beta\geq 2$ and $\delta>0.$ Then

%\begin{eqnarray}
%\label{setteIMPR}
%&&\int_{\delta}^{T}\|u(t)\|_{1,2}^{2\beta}dt<\infty\\\label{setteBISIMPR}
%&&
% \int_{\delta}^{T}\left(\int_{\Omega}\phi_u(t)u^2(t)\right)^{\beta}dt<\infty
%\end{eqnarray}
%
\begin{equation}\label{ottoIMPR}
 \int_{\delta}^{T}\|u_i(t)\|_{q+1}^{\beta(q+1)}dt<\infty
\end{equation}
and 

\begin{equation}\label{dieci}
\sup_{[\delta,T)}\|u_i(t)\|_{a}<\infty, \qquad\mbox{ for all }a<a(\beta),
\end{equation}
where $$a(\beta)=q +1-\frac{q-1}{\beta+1}.$$ 
\end{lemma}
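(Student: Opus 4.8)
The plan is to prove \eqref{ottoIMPR} for all $\beta\ge 2$ by induction, extracting \eqref{dieci} at each stage from the interpolation embedding \eqref{interpolationTheorem1}.

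For the base case $\beta=2$ I would start from the energy inequality \eqref{seiBIS}: by H\"older and the $L^2$-bound \eqref{tre},
$$\|u_i(t)\|_{q+1}^{q+1}\leq C(1+\|u_i(t)\|_2\|{u_i}_t(t)\|_2)\leq C(1+\|{u_i}_t(t)\|_2),$$
so that $\|u_i(t)\|_{q+1}^{2(q+1)}\leq C(1+\|{u_i}_t(t)\|_2^2)$; integrating over $[\delta,T)$ and using the dissipation bound \eqref{cinque} gives \eqref{ottoIMPR} for $\beta=2$. Since $u_i\in W^{1,2}(J,L^2(\Omega))$ on every compact $J\subset[\delta,T)$ (again by \eqref{tre} and \eqref{cinque}), \eqref{interpolationTheorem1} then yields \eqref{dieci} for $\beta=2$, i.e. $\sup_{[\delta,T)}\|u_i\|_a<\infty$ for $a<a(2)$.

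For the inductive step I would assume \eqref{ottoIMPR} (hence \eqref{dieci}) for a given $\beta$ and upgrade it to a strictly larger exponent by feeding the equation into the maximal Sobolev regularity estimate \eqref{MaximalSobolev} on $J=[\delta,T]$. Choosing a space exponent $b$ with $W^{2,b}(\Omega)\hookrightarrow L^{q+1}(\Omega)$ and a time exponent $\sigma$, one has
$$\|u_i\|_{L^\sigma(J,W^{2,b})}\leq C(\|u_i(\delta)\|_{W^{s,b}}+\|F_i(u_i)\|_{L^\sigma(J,L^b)}),$$
the initial term being finite because $u_i(\delta)$ is smooth. The nonlinear term I would estimate by the polynomial bound of Lemma \ref{stimanonlinearitap} (which already encodes the Hardy--Littlewood--Sobolev inequality for the Newtonian part, and is where I would invoke Lemma \ref{Lemma1} once more), reducing matters to $\int_J\|u_i\|_{qb}^{q\sigma}\,dt$. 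Interpolating the $L^{qb}$-norm between the sup-bound \eqref{dieci} and the $L^{q+1}$-norm controlled in time by \eqref{ottoIMPR}, and then using $W^{2,b}\hookrightarrow L^{q+1}$, I would obtain $\int_J\|u_i\|_{q+1}^{\sigma}\,dt<\infty$ with $\sigma>\beta(q+1)$, i.e. \eqref{ottoIMPR} for a larger $\beta$; \eqref{interpolationTheorem1} then upgrades \eqref{dieci} accordingly, and iterating one has $a(\beta)\nearrow q+1$, covering every $\beta\ge 2$.

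The main obstacle is the bookkeeping of exponents in the inductive step, and in particular verifying that the step is genuinely improving. The constraint $W^{2,b}\hookrightarrow L^{q+1}$ forces $b\ge\frac{3(q+1)}{2q+5}$, hence $qb\ge\frac{3q(q+1)}{2q+5}$, which is admissible (indeed strictly below $q+1$) precisely because $q<2^*-1=5$; this subcriticality is what keeps the interpolation parameter and the gain in time-integrability positive. The delicate point I anticipate is that the starting exponent $a(2)$ lies below the scaling-critical value $\tfrac{3(q-1)}{2}$ as soon as $q>17/5$, so the improvement cannot be read off from a scale-invariant estimate alone; here I expect one must exploit the finiteness of $T$ together with the smallness of the base-case integral $\int_J\|u_i\|_{q+1}^{2(q+1)}\,dt$ on short subintervals of $[\delta,T]$ in order to absorb the nonlinear contribution and close the bootstrap. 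Finally, for the combined nonlinearities \eqref{2nnlinearity}, \eqref{nnlinearity} one checks that the Newtonian term $\phi_{u_i}u_i$, controlled through Lemma \ref{Lemma1}, is of lower order than $|u_i|^{q-1}u_i$ --- which is exactly why the hypothesis $q\ge 3$ (resp. $q>3$) enters --- so that it never dictates the admissible range of exponents.
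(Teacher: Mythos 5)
Your base case and your extraction of \eqref{dieci} from \eqref{ottoIMPR} via \eqref{interpolationTheorem1} coincide with the paper's STEP 2 and STEP 1, but your inductive step has a genuine gap, and it occurs exactly in the range of $q$ for which the lemma is actually needed. In your step you apply maximal Sobolev regularity \eqref{MaximalSobolev} to $u_i$ itself, with $W^{2,b}(\Omega)\hookrightarrow L^{q+1}(\Omega)$ forcing $qb\geq\mu:=\tfrac{3q(q+1)}{2q+5}$, and you estimate $\|F_i(u_i)\|_b\leq C(1+\|u_i\|_{qb}^q)$ by interpolating $\|u_i\|_{qb}\leq\|u_i\|_a^{1-\eta}\|u_i\|_{q+1}^{\eta}$ with $a<a(\beta)$. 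To make the right-hand side finite using \eqref{ottoIMPR} you need $\eta q\sigma\leq\beta(q+1)$, so the best new time exponent is $\sigma=\beta(q+1)/(\eta q)$: you gain if and only if $\eta q<1$. With the optimal choices ($qb=\mu$, $a\to a(\beta)$) a direct computation gives $\eta q<1\iff\tfrac{q-1}{a(\beta)}<\tfrac{2}{3}\iff a(\beta)>\tfrac{3}{2}(q-1)$, i.e.\ the sup bound must already be \emph{supercritical} with respect to parabolic scaling. At the base case, $a(2)=\tfrac{2(q+2)}{3}>\tfrac{3}{2}(q-1)$ holds precisely when $q<\tfrac{17}{5}$. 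Hence for $q\in[\tfrac{17}{5},5)$ your induction never starts; and as Remark \ref{remarkDifferenze}(ii) and Section 5 make clear, for $q<\tfrac{17}{5}$ the weaker bound \eqref{1StimaNormaMigliore} already suffices for the global existence theorems, so the range where your argument works is the range where Lemma \ref{proofTinfinito} is dispensable. The rescue you anticipate (smallness of $\int_J\|u_i\|_{q+1}^{2(q+1)}dt$ on short subintervals, plus absorption) cannot repair this: when $\eta q\geq1$, any target $\sigma>\beta(q+1)$ makes the required integral $\int_J\|u_i\|_{q+1}^{\eta q\sigma}dt$ carry the exponent $\eta q\sigma\geq\sigma>\beta(q+1)$, which is not controlled by the induction hypothesis at all, and no H\"older interpolation in time between the exponents $\beta(q+1)$ and $\sigma$ is possible since $\eta q\sigma$ does not lie between them. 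This is an exponent mismatch, not a large constant that smallness could absorb.

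The paper's STEP 3 avoids the obstruction with a structurally different bootstrap (in the spirit of Quittner): maximal Sobolev regularity is applied to the \emph{time derivative}, bounding $\int_\delta^T\|{u_i}_t\|_{\frac{q+1}{q}}^{\theta\tilde\beta s'}dt$ by $\int_\delta^T\|F_i(u_i)\|_{\frac{q+1}{q}}^{\theta\tilde\beta s'}dt$, and the crucial extra input is the energy inequality \eqref{seiBIS}, which yields $\|u_i\|_{q+1}^{q+1}\leq C(1+\|u_i\|_a\|{u_i}_t\|_{a'})$ via H\"older; one then interpolates $\|{u_i}_t\|_{a'}$ between $\|{u_i}_t\|_{\frac{q+1}{q}}$ and $\|{u_i}_t\|_2$ and uses the dissipation bound \eqref{cinque}. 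It is precisely this use of the variational structure --- the energy lower bound entering through \eqref{seiBIS}, \eqref{cinque} --- that allows the iteration to run \emph{below} the scaling-critical exponent $\tfrac{3}{2}(q-1)$ and to cover all $q<2^*-1$; a purely regularity-theoretic bootstrap on $u_i$, as you propose, cannot. If you want to salvage your write-up, keep your base case and STEP 1, but replace the inductive step by an estimate of $\|{u_i}_t\|$ in the dual-type space $L^{\frac{q+1}{q}}(\Omega)$ via \eqref{MaximalSobolev} and \eqref{usala}, combined with \eqref{seiBIS} as above.
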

\begin{proof}
We divide the proof in the following steps:
\begin{itemize}
%
%\item[STEP 1] We show that the validity of \eqref{setteIMPR} and \eqref{setteBISIMPR} implies \eqref{ottoIMPR}.
%
\item[STEP 1] We show that \eqref{ottoIMPR} implies \eqref{dieci}.
\item[STEP 2] We prove that \eqref{ottoIMPR} is true for $\beta=2$ and $\delta=0$.
\item[STEP 3] We prove the validity of \eqref{ottoIMPR} for any $\beta>2$ (and any $\delta>0$) through a bootstrap argument.
\end{itemize}
$$$$
%STEP 1.
%\begin{eqnarray} \label{asINStepOne}
%&&\|u(t)\|_{q+1}^{q+1}=-(q+1)E(u(t))+\frac{q+1}{2}\|u(t)\|_{1,2}^2+\frac{q+1}{4}\int_{ \Omega}\phi_{u(t)}u(t)^2\\
%&&\leq -(q+1)E_{inf}+\frac{q+1}{2}\|u(t)\|_{1,2}^2+\frac{q+1}{4}\int_{\Omega}\phi_{u(t)}u(t)^2\nonumber\\
%&&\leq C\left( 1+\|u(t)\|_{1,2}^2+ \int_{\Omega}\phi_{u(t)}u(t)^2\right)\nonumber
%\end{eqnarray}
%elevating to the $\beta$ 
%$$\|u(t)\|_{q+1}^{\beta(q+1)}\leq C\left( 1+\|u(t)\|_{1,2}^{2\beta}+ \left(\int_{\Omega}\phi_{u(t)}u(t)^2\right)^{\beta}\right)$$
%integrating and using \eqref{setteIMPR} and \eqref{setteBISIMPR} we get the conclusion.
%$$$$
STEP 1. Putting together \eqref{cinque} and \eqref{ottoIMPR} we have
\begin{equation}\label{intEstuno} \int_{\delta}^{T}\left(\|{u_i}_t(t)\|_2^{2}+\|u_i(t)\|_{q+1}^{\beta(q+1)}\right)dt<\infty,
\end{equation}
and so \eqref{dieci} is a consequence of the interpolation embedding \eqref{interpolationTheorem1}.
$$$$
STEP 2.
We prove that \eqref{ottoIMPR} holds with $\beta=2$ and $\delta=0.$ From \eqref{seiBIS} (indeed we are assuming $q>3$ when $i=3$) and using H\"older inequality we have $$\|u_i(t)\|_{q+1}^{q+1}\leq C(1+\|u_i(t){u_i}_t(t)\|_1)\leq C(1+\|u_i(t)\|_2\|{u_i}_t\|_2)\quad\forall t>0,$$
so, raising it to the power two and using \eqref{tre} it follows
$$\|u_i(t)\|_{q+1}^{2(q+1)}\leq C(1+\|u_i(t)\|_2^2\|{u_i}_t\|_2^2)\leq C(1+\|{u_i}_t(t)\|_2^2)\quad\forall t>0.$$
Integrating, using \eqref{cinque} and the assumption $T<\infty$ we get the conclusion
$$\int_0^{T}\|u_i(t)\|_{q+1}^{2(q+1)}dt\leq C\int_0^{T}(1+\|{u_i}_t(t)\|^2_2)dt<\infty.$$
\\
STEP 3. 
We want  to prove that the validity of \eqref{ottoIMPR} for some $\beta\geq 2$ implies \eqref{ottoIMPR}  for some $\tilde\beta>\beta$. Moreover we want   the difference $\tilde\beta-\beta$ to be bounded below by a positive uniform constant for all $\beta$ so that, performing a bootstrap procedure, after finitely many steps, we end up with some $\tilde{\beta}$ big enough.\\
\\
First observe that from inequality \eqref{laSeconda} taking $r:=\frac{q+1}{q}$ it follows
\begin{equation}\label{usala}
\|F_i(u_i(t))\|_{\frac{q+1}{q}} \leq C\left(1+\|\, |u_i(t)|^{\kappa}\,\|_{\frac{q+1}{q}}\right)=
C \left(1+\|u_i(t)\|_{q+1}^{\frac{q}{q+1}(q+1)}\right) 
,
\end{equation}
where $\kappa:=\max\{q,3\}=q,$ since we assumed $q\geq 3.$\\
%$\;$\\\textcolor{red}{Moreover, using H\"older inequality with exponents $\alpha=\frac{3q}{q-2}, \alpha'=\frac{3q}{2(q+1)}$ (observe that this is possible since $q>3$ by assumption), applying the  inequality \eqref{HLS_nel_mio_caso} with $m=\frac{q+1}{q}$ 
%%and estimating $\|u(t)\|_{\frac{3}{2}}$ with  \eqref{dieci} (since $3/2<a(\beta))$ 
%and using Sobolev embeddings we obtain:
%\begin{eqnarray}\label{eQuestAltra}
%\|\phi_u(t)u(t)\|_{\frac{q+1}{q}}^{\theta\tilde\beta s'} && \leq C\|\phi_u(t)\|_{\frac{3(q+1)}{q-2}}^{\theta\tilde\beta s'}\|u(t)\|_{\frac{3}{2}}^{\theta\tilde\beta s'}\nonumber\\
%&&\leq C \|u(t)\|_{\frac{2}{q}(q+1)}^{2\theta\tilde\beta s'} \|u(t)\|_{\frac{3}{2}}^{\theta\tilde\beta s'} \nonumber\\
%&&\leq C \|u(t)\|_{\frac{2}{q}(q+1)}^{2\theta\tilde\beta s'}\|u(t)\|_{q+1}^{\theta\tilde\beta s'}\qquad\qquad(3/2<q+1)\nonumber\\
%&& \leq C \|u(t)\|_{q+1}^{3\theta\tilde\beta s'} \qquad\qquad\quad\qquad\qquad(2/q<1)\nonumber\\
%&&\leq C \left(1+\|u(t)\|_{q+1}^{q\theta\tilde\beta s'} \right)\nonumber\\
%&&\leq C \left(1+\|u(t)\|_{q+1}^{\frac{q\theta\tilde\beta s'}{q+1}(q+1)} \right),\end{eqnarray}
%where we have used once more the assumption $q> 3.$}\\

Hence the proof follows applying to our case the abstract bootstrap Lemma 2.2-(i) in \cite{QuittnerGen}. Indeed, our problem possesses the  maximal Sobolev regularity property (see \eqref{MaximalSobolev}) and so, taking $\mathcal G(u):=C(1+\|u\|_{q+1}^{q+1})$ for a suitable positive constant $C,$ one can show that the inequality \eqref{usala} above, together with \eqref{seiBIS}, \eqref{ottoIMPR} and \eqref{cinque},  implies that $\mathcal G(u_i(t))$ satisfies all the assumptions required in \cite{QuittnerGen}. 
\\
Anyway for completeness we briefly repeat here the main points of the proof adapting the abstract setting of \cite{QuittnerGen} to our case.\\

Hence, fix any $\delta>0$ and let \eqref{ottoIMPR}  be true for some $\beta\geq 2$. Then by STEP 1 we know that \eqref{dieci} is true for all $a<a(\beta)$ (notice that $2<a(\beta)<q+1$). Choose $a>2$ and denote
$$\theta=\frac{q+1}{q-1}\frac{a-2}{a},$$
then $\theta\in (0,1)$ and using \eqref{seiBIS},  H\"older inequality, \eqref{dieci} and the interpolation inequality
%\textcolor{blue}{(per abbassare l'espo di $\|u_t\|_2$)}
, we obtain
\begin{eqnarray}
\|u_i(t)\|_{q+1}^{q+1}&&\leq C(1+\|u_i(t){u_i}_t(t)\|_1)\leq C(1+\|u_i(t)\|_{a}\|{u_i}_t(t)\|_{a'})\nonumber\\\label{daElevare}
&& \leq C(1+\|{u_i}_t(t)\|_{a'})
\leq C(1+\|{u_i}_t(t)\|_{\frac{q+1}{q}}^{\theta}\|{u_i}_t(t)\|_2^{1-\theta})\quad\mbox{ for all }\, t>0.
\end{eqnarray}
%since $\frac{\theta}{(q+1)/q}+\frac{1-\theta}{2}=\frac{1}{a'}$ by construction.

We now fix any $h\in (0,2)$ and  take $\tilde\beta=\beta+h.$ 
Than it is easy to show that for $a$ is sufficiently close to $a(\beta)$ one has 
\begin{equation}\label{sMaggUno} s:=\frac{2}{(1-\theta)\tilde\beta}>1.\end{equation} 
Notice also that $\theta\tilde\beta s'>1.$ 
We raise \eqref{daElevare} to the power $\tilde \beta$ and integrate it from $\delta$ to $T$, use H\"older inequality with exponents $s$ and $s'$, \eqref{cinque}, the maximal Sobolev regularity \eqref{MaximalSobolev} to get
\begin{eqnarray}\label{partenza}
&&\int_{\delta}^{T}\|u_i(t)\|_{q+1}^{\tilde\beta(q+1)}dt \leq C\left(1+\int_{\delta}^{T}\|{u_i}_t(t)\|_{\frac{q+1}{q}}^{\theta\tilde\beta}\|{u_i}_t(t)\|_2^{(1-\theta)\tilde\beta}dt\right)\nonumber\\
&&\nonumber\\
&& \qquad\qquad\quad \stackrel{\scriptsize{\mbox{(H\"older ineq.)}}}{\leq} C  \left[      1+
\left(\int_{\delta}^{T}\|{u_i}_t(t)\|_{\frac{q+1}{q}}^{\theta\tilde\beta s'}dt\right)^{\frac{1}{s'}}
\left(\int_{\delta}^{T}\|{u_i}_t(t)\|_{2}^{(1-\theta)\tilde\beta s}dt\right)^{\frac{1}{s}}  \right]  \nonumber\\
&& \qquad\qquad\qquad \;\;\; = C  \left[      1+
\left(\int_{\delta}^{T}\|{u_i}_t(t)\|_{\frac{q+1}{q}}^{\theta\tilde\beta s'}dt\right)^{\frac{1}{s'}}
\left(\int_{\delta}^{T}\|{u_i}_t(t)\|_{2}^{2}dt\right)^{\frac{1}{s}}  \right]   \nonumber\\
&&\qquad\qquad\qquad \;\; \stackrel{\scriptsize{\mbox{\eqref{cinque}}}}{\leq} C \left[ 1+ \left(\int_{\delta}^{T}\|{u_i}_t(t)\|_{\frac{q+1}{q}}^{\theta\tilde\beta s'}dt\right)^{\frac{1}{s'}}
\right]\nonumber\\ 
&&\quad\quad\:\qquad\qquad\;\; \stackrel{\scriptsize{\mbox{\eqref{MaximalSobolev}}}}{\leq} C \left[ 1+ \left(\int_{\delta}^{T}\|F_i(u_i(t))\|_{\frac{q+1}{q}}^{\theta\tilde\beta s'}dt +\|u_i(\delta)\|
_{2, \frac{q+1}{q}}^{\theta\tilde\beta s'}  \right)^{\frac{1}{s'}}\right] \nonumber\\
&&\qquad\qquad\qquad \quad\leq C \left[ 1+ \left(\int_{\delta}^{T}\|F_i(u_i(t))\|_{\frac{q+1}{q}}^{\theta\tilde\beta s'}dt  \right)^{\frac{1}{s'}}\right].
\end{eqnarray}
$$$$
Last substituting \eqref{usala}  into \eqref{partenza} we get

\begin{eqnarray*}
&&\int_{\delta}^{T}\|u_i(t)\|_{q+1}^{\tilde\beta(q+1)}dt \leq C\left[ 1+ \left(\int_{\delta}^{T}\|u_i(t)\|_{q+1}^{\frac{q\theta\tilde\beta s'}{q+1}(q+1)}dt\right)^{\frac{1}{s'}}\right],
\end{eqnarray*}

which implies the bound \eqref{ottoIMPR} with $\beta$ replaced by $\tilde\beta,$ since by our choice of $h$ and for $a$ sufficiently close to $a(\beta)$ one can show that 
\begin{equation}\label{condi}\theta s'\leq\frac{q+1}{q}.\end{equation}

%$$$$
%Next we  show that condition \eqref{condi} is satisfied  provided $a$ is sufficiently close to $a(\beta)$. 
% 
%
%
%
%In order to prove that let's observe that \eqref{condi} is equivalent to require $$q\leq\frac{a(\tilde\beta-1)-\tilde\beta}{\tilde\beta-2},$$ namely
%(since $\tilde\beta=\beta+h$) to
%$$q(\beta -2)-a(\beta)(\beta-1)+\beta\leq -h (q+1-a(\beta))-(a(\beta)-a) (\beta-1+h).$$
%Remembering the expression of $a(\beta)$ this is equivalent to
%$$(2-h)\frac{(q-1)}{\beta+1}\geq (a(\beta)-a) (\beta-1+h), $$
%and this is satisfied 
%letting $a\rightarrow a(\beta)^-,$ 
%
%since $h\in (0,2)$ by our choice.
%
%
\end{proof}
$$$$
$$$$
We are now in the position to conclude the proof of Proposition \ref{aPriori}.
$$$$
\begin{proof}[Proof of Proposition \ref{aPriori}. Case $T<\infty$]
$$$$

First we prove the bound \eqref{1StimaNormaMigliore}. From \eqref{sei} (indeed  when $i=3$ we are assuming $q\geq 3$) and using H\"older inequality we have 
$$\|u_i(t)\|_{1,2}^{2}\leq C(1+\|u_i(t){u_i}_t(t)\|_1)\leq C(1+\|u_i(t)\|_2\|{u_i}_t\|_2)\quad\forall t>0,$$
so, raising it to the power two and using \eqref{tre} it follows
$$\|u_i(t)\|_{1,2}^{4}\leq C(1+\|u_i(t)\|_2^2\|{u_i}_t\|_2^2)\leq C(1+\|{u_i}_t(t)\|_2^2)\quad\forall t>0.$$
Integrating, using \eqref{cinque} and the assumption $T<\infty$ we get 
\begin{equation}\label{caso3}\int_0^{T}\|u_i(t)\|_{1,2}^4dt\leq C\int_0^{T}(1+\|{u_i}_t(t)\|^2_2)dt<\infty.\end{equation}
Putting together \eqref{cinque} and \eqref{caso3} we have
\begin{equation}\label{f} \int_{0}^{T}\left(\|{u_i}_t(t)\|_2^{2}+\|u_i(t)\|_{1,2}^{4}\right)dt<\infty,
\end{equation}
and so from the interpolation embedding \eqref{interpolationTheorem2} we obtain the estimate \eqref{1StimaNormaMigliore}:
$$\sup_{(0,T)}\|u_i(t)\|_{a}<\infty \qquad\mbox{ for all } a<\frac{18}{5}.$$\\
\\
We now prove \eqref{2StimaNormaMigliore}. Hence we restrict to the cases $i=2,3$ and assume $q\geq 3$ when $i=2$ and $q>3$ when $i=3.$ 
The estimate \eqref{2StimaNormaMigliore} follows then from Lemma \ref{proofTinfinito}, taking $\beta$ big enough in \eqref{dieci}, since 
$a(\beta)\stackrel{\beta\rightarrow +\infty}{\longrightarrow} q+1.$
\end{proof}

$$$$
\section{Proof of theorems \ref{1TeoGlobalSolution}, \ref{2TeoGlobalSolution} and \ref{TeoGlobalSolution}}

The proof uses Amann's abstract results for global existence and relative compactness (see \cite{Amann}). Precisely we can apply to our case  \cite[Theorem 5.1]{Amann} combined together with \cite[Proposition 3.4]{Amann}.

In view of these general results, we have global solutions and boundedness in $W^{s,p}$-norm for every $s\in [1,2),$ provided that for every $\delta>0$
the following ``polynomial bound'' for $F_i(u_i(t))$ and  a priori bound for $\|u_i(t)\|_{p_0}$ hold
for certain  $1\leq \gamma_0<1+ \frac{2}{3}p_0:$

\begin{equation}\label{penu}
 \|F_i(u_i(t))\|_p\leq C\left(1+\||u_i(t)|^{\gamma_0}\|_p\right)\qquad t\in [\delta,T),
\end{equation}
\begin{equation}\label{ul}
 \sup_{[\delta,T)}\|u_i(t)\|_{p_0}<\infty.
\end{equation}
$$$$

When $i=1$ from  Lemma \ref{stimanonlinearitap}-\eqref{laPrima}  and Proposition \ref{aPriori}-\eqref{1StimaNormaMigliore}  we know that \eqref{penu} and \eqref{ul} are satisfied taking  $\gamma_0=3,$ and any $p_0\in (3,\frac{18}{5}).$ 
 \\ 
\\\\
Next we consider $i=2,3.$ In this case Lemma \ref{stimanonlinearitap}-\eqref{laSeconda}   gives \eqref{penu} with $\gamma_0=\max\{3,q\},$ hence it's enough to prove \eqref{ul} for any $p_0>\max\{3, \frac{3}{2}(q-1)\}.$
\\
\\
If $q<\frac{17}{5},$ namely if $\max\{3, \frac{3}{2}(q-1)\}<\frac{18}{5},$ then the conclusion follows directly from   Proposition \ref{aPriori}-\eqref{1StimaNormaMigliore}.

Precisely the estimate \eqref{1StimaNormaMigliore}, for any fixed $q\in (1, \frac{17}{5})$ in case $i=2$ or $q\in [3,\frac{17}{5})$ in case $i=3,$  implies \eqref{ul}  for any $p_0\in (\max\{3, \frac{3}{2}(q-1)\},\frac{18}{5}).$ \\
\\
In order to consider bigger values of $q$ we need to use Proposition \ref{aPriori}-  \eqref{2StimaNormaMigliore}.

Indeed for any fixed $q\in [\frac{17}{5},2^*-1)$  estimate \eqref{2StimaNormaMigliore} gives \eqref{ul} for  any $p_0\in(\frac{3}{2}(q-1),q+1),$ 
where $\frac{3}{2}(q-1)<q+1$
since $q<2^*-1$ by assumption.

$$$$

It remains to prove the relative compactness property. From the boundedness of the $W^{s,p}$-norm and 
the compactness of the embedding $W^{x,p}(\Omega)\hookrightarrow W^{y,p}(\Omega)$ for $y<x$ it follows that the set $\{u_i(t):t\geq\delta\}$ is relatively compact in $W^{s,p}(\Omega)$ for every $s\in [1,2).$ Hence the conclusion follows  from the embedding $W^{s,p}(\Omega)\hookrightarrow C^1(\bar\Omega)$ choosing $s$ sufficiently close to $2.$ 
$$$$
$$$$
\section*{Acknowledgments} 

The author would like to thank professor Tobias Weth for many enriching discussions and useful suggestions.

$$$$
$$$$


\begin{thebibliography}{99}

\bibitem{Amann2}
H. Amann, {\it Existence and regularity for semilinear parabolic evolution equations}, Ann. Scuola Norm. Sup. Pisa Cl. Sci. (4) 11 (1984), 593--676.


\bibitem{Amann}
H. Amann, {\it Global existence for semilinear parabolic systems}, J. Reine Angew. Math. 360 (1985), 47--83.

\bibitem{AmannBook}
H. Amann, {\it Linear and quasilinear parabolic problems, Volume I: abstract linear theory}, Birkh\"auser, Basel, 2005.


\bibitem{CazenaveLions}
T. Cazenave and P.-L. Lions, {\it Solutions globales d´equations de la chaleur semi lineares}, Comm. Part. Diff. Eq. 9(10) (1984), 955--978.

\bibitem{IanniSignChanging}
I. Ianni, {\it Sign-changing radial solutions for the Schr\"odinger-Poisson-Slater problem}, to appear on TMNA.


\bibitem{LiebLoss}
E.H. Lieb and M. Loss, {\it Analysis}, Graduate Studies in Mathematics, vol. 14, AMS, 1997.


\bibitem{Quittner}
P. Quittner, {\it A priori bounds for global solutions of a semilinear parabolic problem}, Acta Math. Univ. Comenianae Vol. LXVIII, 2 (1999), 195-203.

\bibitem{QuittnerGen}
P. Quittner, {\it Continuity of the blow-up time and a priori bounds for solutions in superlinear parabolic problems}, Houston Journal of Mathematics Vol. 29, 3 (2003), 757-799.


\bibitem{QuittnerSoupletBook}
P. Quittner and P. Souplet, {\it Superlinear parabolic problems. Blow-up, global existence and steady states.} Birkh\"auser Verlag 2007.

\bibitem{WeiWeth}
J. Wei and T. Weth, {\it Radial solutions and phase separation in a system of two coupled Schr\"odinger equations}, Arch. Rational Mech. Anal. 190 (2008), 83--106.


\end{thebibliography}
\end{document}